\numberwithin{equation}{section}
\renewcommand*\env@matrix[1][*\c@MaxMatrixCols c]{%
  \hskip -\arraycolsep
  \let\@ifnextchar\new@ifnextchar
  \array{#1}}
\newcommand{\s}{\mathsf s}
\newcommand{\mcA}{\mathcal A}
\newcommand{\A}{\mathcal A}
\newcommand{\ZZ}{\mathbb Z}
\newcommand{\NN}{\mathbb N}
\newcommand{\glom}{\mathcal{G}}
\DeclareMathOperator{\stab}{stab}
\theoremstyle{plain}
\newtheorem{theorem}{Theorem}[section]
\newtheorem{corollary}[theorem]{Corollary}
\newtheorem{lemma}[theorem]{Lemma}
\newtheorem{proposition}[theorem]{Proposition}
\newtheorem{question}[theorem]{Question}
\theoremstyle{definition}
\newtheorem{definition}[theorem]{Definition}
\newtheorem{setup}[theorem]{Setup}
\newtheorem{remark}[theorem]{Remark}
\newtheorem{example}[theorem]{Example}
\begin{document}

\title{Divisor Sequences of Atoms in Krull Monoids}

\dedicatory{This paper is dedicated to Roger and Sylvia Wiegand --- mentors, role models, and friends --- on the occasion of their combined 151st birthday. }

\author{Nicolas R.\ Baeth, Terri Bell, Courtney R.\ Gibbons, and  Janet Striuli}

\address{\hspace{-.16in}\makebox{Nicholas R.\ Baeth, Department of Mathematics, Franklin and Marshall College, Lancaster, PA 17604}\newline \texttt{nicholas.baeth@fandm.edu}.}

\address{\hspace{-.16in}\makebox{Courtney R.\ Gibbons, Department of Mathematics and Statistics, Hamilton College, Clinton, NY 13323}\newline \texttt{crgibbon@hamilton.edu}.}

\address{\hspace{-.16in}\makebox{Terri Bell, Mathematics Department, South Puget Sound Community College, Olympia, WA 98512} \newline \texttt{tbell@spscc.edu}.}

\address{\hspace{-.16in}\makebox{Janet Striuli, Department of Mathematics, Fairfield University, Fairfield, CT 06824} \newline
{National Science Foundation, 2415 Eisenhower Avenue, Alexandria, Virginia 22314}\newline
  \texttt{jstriuli@fairfield.edu}.}

\date{November 2019}

\begin{abstract}
The divisor sequence of an irreducible element (\textit{atom}) $a$ of a reduced monoid $H$ is the sequence $(s_n)_{n\in \mathbb{N}}$ where, for each positive integer $n$, $s_n$ denotes the number of distinct irreducible divisors of $a^n$. In this work we investigate which sequences of positive integers can be realized as divisor sequences of irreducible elements in Krull monoids. In particular, this gives a means for studying non-unique direct-sum decompositions of modules over local Noetherian rings for which the Krull-Remak-Schmidt property fails. 
\end{abstract}

\maketitle

\section{Introduction}

When the Krull-Remak-Schmidt theorem holds (for example, for the class of finitely generated modules over a complete local Noetherian ring), direct-sum decomposition of modules is unique. When Krull-Remak-Schmidt fails, one can ask: for each indecomposable module $M$ over a local Noetherian ring $R$ and for each positive integer $n$, how many non-isomorphic indecomposable $R$-modules are isomorphic to direct summands of $\underset{n}{\underbrace{M\oplus \cdots \oplus M}}$? Of course the answer depends on the ring $R$ and the module $M$, but if we fix these two objects and let $n$ vary, what sequence do we get? More generally, we are interested in understanding  which sequences of positive integers arise in this way when  $R$ and $M$ are allowed to vary.

In \cite{W01} R.~Wiegand showed that every finitely generated Krull monoid can be realized as $+(M)$, the semigroup of isomorphism classes of $R$-modules that are direct summands of direct sums of finitely many copies of $M$, for some $R$-module $M$. This motivated the question above, stated in \cite[Section 9]{WW09} in the language of Krull monoids, where the sequences in question are referred to as \emph{divisor sequences}. As mentioned in that paper, Hassler \cite{H04} gave a single example illustrating that not all reasonable sequences can be realized in this manner. The goal of this work is to describe what sequences occur in this way if one begins with an irreducible element in a Krull monoid: 
 \begin{question}[R.~ and S.~Wiegand]\label{thequestion}
What nondecreasing, eventually constant sequences $$(1, \ldots, 1, s_n, s_{n+1}, \ldots)$$ with $s_n\geq 3$ occur as the divisor sequence of an irreducible element in a Krull monoid?
\end{question}
Our main result, given in Theorem \ref{theorem:FTG} and Corollaries \ref{1isthelonliestnumber} and \ref{partialanswer} is summarized here, and gives an answer to the above question. The necessary definitions are given in Section \ref{s:prelim}.

\bigskip

\noindent {\bf Main Result.} Let $\s=(s_n)_n$ and $(t_n)_n$ be  eventually constant nondecreasing integer sequences with $s_1=t_1=1$. 
\begin{enumerate}
\item Given irreducible elements $x$ and $y$ in Krull semigroups $H_1$ and $H_2$ with divisor sequences $(s_n)_n$ and $(t_n)_n$, there is a Krull semigroup $H$ and an irreducible element $z\in H$ with divisor sequence $(s_n+t_n-1)_n$.
\item If $s_{n+1}-s_n\not=1$ for all $n$, then $\s$ is the divisor sequence of an irreducible element in some Krull monoid.
\item If $s_2\geq 3|\{n\colon s_{n+1}-s_n=1\}|+1$, then $\s$ is the divisor sequence of an irreducible element in some Krull monoid. \end{enumerate}

\smallskip

First, in Section \ref{s:prelim}, we will introduce the relevant definitions and notation. In Section \ref{s:examples}, we provide some infinite families of sequences satisfying some natural properties (see Proposition \ref{easy}) that can and cannot occur as divisor sequences of atoms in Krull monoids. In Section \ref{s:torgleandglom}, we give our main results. In particular, we provide a method for glomming together (see Theorem \ref{theorem:FTG}) two Krull monoids with fixed atoms in such a way that one obtains an atom in a new Krull monoid whose divisor sequence can easily be obtained from the divisor sequences of the two fixed atoms. Then, in Corollary \ref{partialanswer}, we show that most sequences satisfying the properties of Proposition \ref{easy} are realizable as the divisor sequence of an atom in a Krull monoid. Finally, in Section \ref{th}, we study the effect of transfer homomorphisms on divisor sequences. 
\bigskip

\section{Preliminaries}\label{s:prelim}

Throughout, $\mathbb N=\{1, 2, 3, \ldots\}$ denotes the set of positive integers and $\mathbb N_0=\mathbb N\cup\{0\}$. We take a \emph{monoid} $H$ to be a commutative, cancellative  (given $a, b, c \in H$, if $ab=ac$, then  $b=c$) semigroup with the unit element $1$. Moreover, we consider only \emph{reduced} monoids (for $a, b \in H$, if $ab=1$, then  $a=b=1$.) Let $H$ be a reduced monoid. For elements $a,b \in H$ we write $a\mid_Hb$ if there exists an element $c \in H$ such that $b=ac$. An element $a\in H\backslash\{1\}$ is an \emph{atom} (or is said to be \emph{irreducible}) if  the equality $a=bc$, for some $b, c \in H$, implies $b=1$ or $c=1$. We denote the set of atoms of $H$ by $\mathcal A(H)$. Given an element  $h \in H$ we will set $\mcA_n (h) $ to be the set $\{a \in \mcA(H) \colon a \mid h^n\}$. Of course one can write a monoid using additive notation, with $0$ the only invertible element and with $a$ an atom if $a=x+y$ implies exactly one of $x$ or $y$ is zero. For convenience, we will often do just that, with the notation just defined adjusted accordingly. We can now define the main object of interest.

\smallskip

\begin{definition}
Let $h$ be an element of a reduced monoid $H$. The \emph{divisor sequence} of $h$ is the sequence $\s(h)=(s_n)_{n \in \mathbb{N}}$ where each $n \in \mathbb{N}$, $s_n$ is the cardinality of the set $\mcA_n(h)$. For convenience, if there is some $N$ such that $s_n=s_{n+1}$ for all $n\geq N$, we write $\s(h)=(s_1, s_2, \ldots, s_{N-1}, \overline{s_N})$.
\end{definition}

\smallskip

We illustrate this concept now with three simple examples.

\smallskip

\begin{example}\label{easyexamples}
\ \\ \vspace{-.125in}
\begin{enumerate}
    \item If $H$ is a unique factorization monoid and $h\in H\backslash\{1\}$, $h$ factors uniquely as $h=a_1^{r_1}\cdots a_m^{r_m}$ for distinct atoms $a_1, \ldots, a_m\in H$ and $r_1, \ldots, r_m\in \mathbb N$. Then $\s(h)=(m, m,\ldots)=(\overline{m})$.
    \item Let $H$ be the additive reduced monoid $H=\left\{\begin{bsmallmatrix}x \\ y\end{bsmallmatrix}\colon x+2y \equiv 0\mod 3\right\}$. The atoms of $H$ are $\alpha=\begin{bsmallmatrix}3 \\ 0\end{bsmallmatrix}$, $\beta=\begin{bsmallmatrix}0 \\ 3\end{bsmallmatrix}$, and $\gamma=\begin{bsmallmatrix}1 \\ 1\end{bsmallmatrix}$. Since $\gamma+\gamma+\gamma=\alpha+\beta$, $\s(\gamma)=(1, 1, \overline{3})$.
    \item Let $x$ be an indeterminate and let $H=\{x^t\colon t\in \{0,4,5,6,8,9,10,\ldots\}\}$, a multiplicative submonoid of $\mathbb Q[x]$. Then $H$ is a reduced monoid with $\mathcal A(H)=\{x^4, x^5, x^6\}$. It is easy to check that $\s(x^4)=(1,1,2,\overline{3})$, $\s(x^5)=(1,\overline{3})$, and $\s(x^6)=(1, 2, \overline{3})$.
\end{enumerate}
\end{example}

\smallskip

\begin{remark}
An atom $a$ of a reduced commutative cancellative semigroup $H$ is a \emph{strong atom} (see \cite{CK12}) if the only atom of $H$ that divides powers of $a$ is $a$ itself. Example \ref{easyexamples}(1) then can be generalized to say that $h$ is a strong atom if and only if $\s(h)=(1,1,1,\ldots)$. Thus, in general, the growth of the sequence $\s(h)$, for an atom $h$, provides a measure of how far $h$ is from being a strong atom.
\end{remark}

\smallskip

In large part because the motivation for studying divisor sequences comes from module theory, we restrict our study to Krull monoids. Therefore, in what follows we will consider only divisor sequences $\s(a)$ with $a$ an atom of a Krull monoid $H$.  A thorough treatment of Krull monoids and their arithmetic can be found in \cite{GHK06}.

\smallskip

Let $H$ and $D$ be two  monoids, a \emph{divisor homomorphism} is a semigroup homomorphism $\varphi: H \rightarrow D$ such that, for $x,y\in H$,  $x\mid_H y $ if and only if $\varphi(x)\mid_K\varphi(y)$.  A monoid $H$ is \emph{Krull} if there is a divisor homomorphism $\varphi: H\rightarrow F$ for some free monoid $F$. 

\smallskip
The proposition below \cite[Proposition 2.7]{WW09} gives equivalent characterizations of Krull semigroups; we will often use statement (2)  when we wish to prove that a semigroup is Krull.

\begin{proposition}[\cite{WW09}, Proposition 2.7]\label{krull}
Let $H$ be a finitely generated reduced monoid. The following are equivalent.
\begin{enumerate}
    \item $H$ is Krull.
    \item $H$ is a positive normal affine semigroup; that is, $H$ is isomorphic to a finitely generated subsemigroup of $\mathbb N_0^t$ for some positive integer $t$ and such that if $g\in \mathbb ZH$ and $Ng\in H$ for some $N\in \NN$, then $g\in H$.
    \item $H$ is isomorphic to a full subsemigroup $H'$ of $\mathbb N_0^s$ for some $s\geq 1$; the inclusion $H'\subseteq \mathbb N_0^s$ is a divisor homomorphism.
    \item $H$ is isomorphic to an expanded subsemigroup $H'$ of $\mathbb N_0^t$ for some $t\geq 1$; $H=\mathbb N_0^t\cap L$ for some $\mathbb Q$-subspace $L$ of $\mathbb Q^t$.
    \item $H\cong G\cap \mathbb N_0^t$ for some $t\geq 1$ and some subgroup $G$ of $\mathbb Z^t$.
\end{enumerate}
\end{proposition}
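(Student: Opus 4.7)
The plan is to establish the five conditions as equivalent by a cyclic chain of implications, specifically $(1)\Rightarrow(2)\Rightarrow(5)\Rightarrow(4)\Rightarrow(3)\Rightarrow(1)$. Throughout I would use freely that $H$ is finitely generated, reduced, and cancellative.

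For $(1)\Rightarrow(2)$, I would start from a divisor homomorphism $\varphi\colon H\to F$ into a free monoid $F$. The reduced hypothesis forces $\varphi$ to be injective: from $\varphi(x)=\varphi(y)$ we get $x\mid_H y$ and $y\mid_H x$, leading to a unit equation that collapses to $x=y$. Finite generation of $H$ lets me replace $F$ by $\mathbb{N}_0^t$ for some $t$. To verify normality, a putative $g=\varphi(x)-\varphi(y)\in\mathbb{Z}\varphi(H)$ with $Ng=\varphi(z)$ satisfies $\varphi(x^N)=\varphi(y^Nz)$, and divisor-homomorphism applied in $F$ (where $\varphi(y)^N\mid\varphi(x)^N$ forces $\varphi(y)\mid\varphi(x)$) gives $y\mid_H x$; the cofactor then maps to $g$.

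For $(2)\Rightarrow(5)$, which I expect to be the technically central step, I would invoke polyhedral geometry. Let $C=\mathbb{Q}_{\geq 0}H\subseteq\mathbb{Q}^t$ be the rational cone spanned by $H$; finite generation makes $C$ rational polyhedral, and positivity ($H\subseteq\mathbb{N}_0^t$) makes $C$ pointed. Write $C=\{x\in\mathbb{Q}H : \sigma_1(x)\geq 0,\ldots,\sigma_s(x)\geq 0\}$ for primitive integral linear forms $\sigma_i$ defining the facets of $C$. Normality yields $H=\mathbb{Z}H\cap C$: any $g$ on the right admits $Ng\in H$ after clearing denominators in a conic decomposition, whence $g\in H$ by the hypothesis in (2). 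The facet-evaluation map $\psi(x)=(\sigma_1(x),\ldots,\sigma_s(x))$ is a $\mathbb{Q}$-linear injection of $\mathbb{Q}H$ into $\mathbb{Q}^s$ (injectivity uses pointedness of $C$), carries $\mathbb{Z}H$ into $\mathbb{Z}^s$, and pulls $\mathbb{N}_0^s$ back to $C$. Setting $G:=\psi(\mathbb{Z}H)$ gives $\psi(H)=G\cap\mathbb{N}_0^s$, which is (5).

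For $(5)\Rightarrow(4)$, I would take $L=\mathbb{Q}G$; because the preceding construction can be arranged (by further replacing $G$ by its saturation in $\mathbb{Z}^s$, which does not change the intersection with $\mathbb{N}_0^s$ once coordinates are chosen appropriately) to make $G=L\cap\mathbb{Z}^s$, I obtain $H\cong L\cap\mathbb{N}_0^s$. For $(4)\Rightarrow(3)$, the full property is immediate: if $x,y\in L\cap\mathbb{N}_0^t$ with $y-x\in\mathbb{N}_0^t$, then $y-x\in L$ as well, so $y-x\in L\cap\mathbb{N}_0^t$. For $(3)\Rightarrow(1)$, the full inclusion $H\hookrightarrow\mathbb{N}_0^s$ is itself a divisor homomorphism into a free monoid, closing the cycle.

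The main obstacle is $(2)\Rightarrow(5)$. Everything there depends on convex geometry: the polyhedrality of $\mathbb{Q}_{\geq 0}H$, its pointedness (which forces the facet-evaluation map to be injective and hence to realize an embedding into $\mathbb{N}_0^s$ of the right shape), and the rationality/primitivity of its facet equations, together with the passage from a conic decomposition of $g\in\mathbb{Z}H\cap C$ to the witness $Ng\in H$ needed for normality. The remaining implications are largely formal, provided one is willing to rechoose the ambient embedding where necessary to absorb any failure of $G$ to be saturated in the ambient lattice.
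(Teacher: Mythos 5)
The paper offers no proof of this proposition: it is quoted verbatim from \cite{WW09}, which in turn rests on the classical theory of normal affine semigroups (Hochster; see also \cite{BH93}, Chapter 6). So your argument has to stand on its own, and most of it does. The implication $(1)\Rightarrow(2)$ (injectivity of a divisor homomorphism out of a reduced monoid, passage to finitely many free generators, and normality via $\varphi(y)^N\mid\varphi(x)^N\Rightarrow\varphi(y)\mid\varphi(x)$ in a free monoid), the implication $(2)\Rightarrow(5)$ (pointedness of $C=\mathbb{Q}_{\geq 0}H$, the identity $H=\mathbb{Z}H\cap C$ from clearing denominators plus normality, and the facet-evaluation embedding $\psi=(\sigma_1,\ldots,\sigma_s)$ with $\psi(H)=\psi(\mathbb{Z}H)\cap\mathbb{N}_0^s$), and the formal implications $(4)\Rightarrow(3)\Rightarrow(1)$ are all correct and are the standard route.

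The genuine gap is $(5)\Rightarrow(4)$. Since $\mathbb{N}_0^{t}\cap L=\mathbb{N}_0^{t}\cap(L\cap\mathbb{Z}^{t})$, condition (4) is exactly the statement that $H$ can be realized as $G'\cap\mathbb{N}_0^{t'}$ with $G'$ \emph{saturated} in $\mathbb{Z}^{t'}$, and your parenthetical remedy --- replace $G$ by its saturation, ``which does not change the intersection with $\mathbb{N}_0^s$ once coordinates are chosen appropriately'' --- is precisely the nontrivial content and, read literally, is false: for $G=2\mathbb{Z}\subseteq\mathbb{Z}$ one has $G\cap\mathbb{N}_0=2\mathbb{N}_0$ while the saturation gives $\mathbb{N}_0$, and nothing in your construction makes $\psi(\mathbb{Z}H)$ saturated or explains how the coordinates are to be rechosen. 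What is needed is the congruence-encoding trick that goes back to Hochster, most cleanly applied to prove $(2)\Rightarrow(4)$ directly: the overlattice $\Lambda=\{x\in\mathbb{Q}H:\sigma_i(x)\in\mathbb{Z}\text{ for all }i\}$ contains $\mathbb{Z}H$ with finite index, so $\mathbb{Z}H=\{x\in\Lambda:\mu_j(x)\equiv 0 \bmod n_j,\ j=1,\ldots,m\}$ for suitable integral forms $\mu_j$ on $\Lambda$; since $\sum_i\sigma_i$ is strictly positive on $C\setminus\{0\}$, one may append the coordinates $\tau_j=\bigl(\mu_j+M_j\sum_i\sigma_i\bigr)/n_j$ with $M_j$ a sufficiently large multiple of $n_j$, so that each $\tau_j$ is nonnegative on $C$ and, for $x\in\Lambda$, is an integer exactly when the $j$th congruence holds. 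The enlarged linear map $(\sigma_1,\ldots,\sigma_s,\tau_1,\ldots,\tau_m)$ is still injective and identifies $H$ with $L\cap\mathbb{N}_0^{s+m}$, where $L$ is the image of $\mathbb{Q}H$; this yields (4), from which your (3) and (5) follow at once. Without some such argument your cycle of implications does not close at (4).
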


\smallskip

Let $a$ and $b$ be atoms of a finitely generated reduced Krull monoid $H$. If $m<n$, then it is clear that $b\mid a^m$ implies $b\mid a^n$. Consequently, $\s(a)$ is always a nondecreasing sequence. It is also clear that if $b\mid a$, then $b=a$, and so $\s(a)$ begins with $1$. And, since $H$ is finitely generated, $\mathcal A(H)$ is finite and thus $\s(a)$ eventually stabilizes. Another easy, though less obvious fact, pointed out in \cite[Section 9]{WW09}, is that $2$ does not appear in $\s(a)$ if $a$ is an atom. Indeed, suppose that this is not the case and let $n \in \mathbb{N}$  be the smallest integer such that $a^n$ is divisible by precisely two atoms, $a$ and $b\not=a$. 
Since $b\mid a^n$, $a^n=b^ka^m$ for some positive integerms $m\leq n$ and  $k$. By cancellation, $b^k=a^{n-m}$. 
By Proposition \ref{krull}, we can view $H$ as an additive full submonoid of $\mathbb N^{(t)}$ for some $t$. Using the component-wise partial ordering on $\mathbb N^{(t)}$, $b\leq a$ or $a\leq b$, depending on the values of $k$ and $n-m$. Then in $\mathbb N^{(t)}$, and also in $H$ since $H$ is full in $\mathbb N^{(t)}$, $b\mid a$ or $a\mid b$. Since $a$ and $b$ are irreducible, we have that $a=b$.

The observations above can be summarized in the following, \cite[Section 9]{WW09}:

\smallskip

\begin{proposition}\label{easy}
Let $a$ be an atom of a finitely generated reduced Krull monoid and let $\s(a)=(s_n)_{n\in\mathbb{N}}$. Then
\begin{enumerate}
    \item $s_1=1$,
    \item  $s_n\not=2$ for any $n \in \mathbb{N}$,
    \item $s_{n+1}\geq s_n$, for all $n \in \mathbb{N}$,
    \item there exists $N \in \mathbb{N}$ such that $s_n=s_{n+1}$ for all $n \geq N$.
    \end{enumerate}
\end{proposition}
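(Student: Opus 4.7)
The plan is to verify the four items essentially in the order they are listed, since (1), (3), and (4) are formal consequences of the definitions together with the finite-generation hypothesis, while (2) is the only statement that actually uses the full structure of a Krull monoid. For (1), I would observe that any atom dividing $a=a^1$ must equal $a$ by the irreducibility of $a$, so $\mcA_1(a)=\{a\}$ and $s_1=1$. For (3), if $b\in\mcA_n(a)$ then $b\mid a^n\mid a^{n+1}$, giving the containment $\mcA_n(a)\subseteq\mcA_{n+1}(a)$ and hence $s_n\le s_{n+1}$. For (4), the key point is that $\mcA_n(a)\subseteq\mcA(H)$ for every $n$, and since $H$ is finitely generated $\mcA(H)$ is finite; thus the nondecreasing sequence $(s_n)$ is bounded and must eventually stabilize.

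The only real work is (2), which I will prove by exactly the argument sketched in the paragraph preceding the proposition, recorded here as a plan. Suppose for contradiction that some $s_n=2$, and pick $n$ minimal with this property. Then $\mcA_n(a)=\{a,b\}$ for some atom $b\ne a$. Because $b\mid a^n$ in $H$, we can write $a^n=b\cdot c$ for some $c\in H$, and factoring $c$ into atoms (each of which must lie in $\mcA_n(a)=\{a,b\}$) gives $a^n=b^k a^m$ for integers $k\ge 1$ and $0\le m\le n$. Cancelling yields $b^k=a^{n-m}$, with $n-m\ge 1$ (otherwise $b^k=1$, forcing $b=1$, a contradiction).

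Next I would invoke Proposition \ref{krull} to realize $H$ as a full submonoid of $\mathbb N_0^t$ for some $t$, writing $a$ and $b$ as vectors. The equation $kb=(n-m)a$ in $\mathbb N_0^t$ forces the vectors $a$ and $b$ to be positive scalar multiples of one another, so componentwise either $b\le a$ or $a\le b$ in $\mathbb N_0^t$. Because $H$ is full in $\mathbb N_0^t$, componentwise divisibility in $\mathbb N_0^t$ coincides with divisibility in $H$, so either $b\mid_H a$ or $a\mid_H b$. In either case the irreducibility of both $a$ and $b$ (combined with the fact that $H$ is reduced) forces $a=b$, contradicting $b\ne a$.

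The main obstacle, and the only genuinely nontrivial ingredient, is the passage from the equation $b^k=a^{n-m}$ to componentwise comparability: this is precisely where fullness of the embedding $H\hookrightarrow\mathbb N_0^t$ is used, and it is the reason the statement requires $H$ to be Krull rather than merely finitely generated and reduced. Once this linear-algebra observation is in place, the remainder of (2) and all of (1), (3), (4) are routine.
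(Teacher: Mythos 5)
Your proposal is correct and follows essentially the same route as the paper: items (1), (3), (4) by the same routine observations, and item (2) via the identity $b^k=a^{n-m}$, the full embedding $H\subseteq \mathbb N_0^{(t)}$ from Proposition \ref{krull}, and componentwise comparability forcing $a=b$. No gaps; the argument matches the paper's own.
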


\smallskip

\begin{remark}
Clearly all but the second condition of Proposition \ref{easy} hold for the divisor sequence of an atom in any commutative cancellative monoid that is finitely generated. However, the second condition need not hold; see Example \ref{easyexamples}(3). Thus the existence of certain divisor sequences can be used as a test to determine that a monoid is not Krull.
\end{remark}

\smallskip

Before closing the section, we  prove a useful generalization of Proposition \ref{easy}(2). This result will be used in Section \ref{s:examples} to show that certain sequences cannot occur as divisor sequences of atoms in Krull monoids.

\smallskip

\begin{lemma}\label{lemma:minimalpower}
Let $H$ be a finitely generated reduced Krull monoid. Let $x,y\in \mathcal A(H)$ and let $m, n \in \NN$ with $m \leq n-1$. If $x^m \mid y^n$ but $x^m \nmid y^{n-1}$, then $x^{m+1} \nmid y^n$.
\end{lemma}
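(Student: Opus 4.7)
The plan is to argue by contradiction: assume $x^{m+1}\mid y^n$ and derive an arithmetic impossibility. The strategy mirrors the proof of Proposition~\ref{easy}(2), but instead of manufacturing an equality between powers of two atoms, I will work directly with componentwise inequalities. First, using Proposition~\ref{krull}(3), I realize $H$ as a full subsemigroup of $\mathbb{N}_0^t$ for some $t$, so that divisibility in $H$ coincides with the componentwise order $\leq$. Switching to additive notation, the three conditions become $mx\leq ny$ and $(m+1)x\leq ny$ (both componentwise) together with $mx\not\leq (n-1)y$.

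From $mx\not\leq (n-1)y$ I pick a coordinate $j$ with $mx_j>(n-1)y_j$; by integrality this upgrades to $mx_j\geq (n-1)y_j+1$. On the same coordinate the assumption $x^{m+1}\mid y^n$ supplies $(m+1)x_j\leq ny_j$. Multiplying the first inequality by $m+1$, the second by $m$, and chaining them produces
\[
(m+1)\bigl((n-1)y_j+1\bigr)\ \leq\ m(m+1)x_j\ \leq\ mn\,y_j,
\]
which after expanding $(m+1)(n-1)=mn-m+n-1$ simplifies to $m+1\leq (m-n+1)\,y_j$.

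Since $m\leq n-1$, the coefficient $m-n+1$ is nonpositive while $y_j\geq 0$, so the right-hand side is at most $0$, contradicting $m+1\geq 2$. I do not anticipate any real obstacle. The only subtle point is the integrality sharpening of $mx_j>(n-1)y_j$ to $mx_j\geq (n-1)y_j+1$; this is precisely what lets a strict inequality at coordinate $j$ defeat the non-strict inequality $(m+1)x_j\leq ny_j$. The hypothesis $m\leq n-1$ is used exactly once, at the very end, to sign the coefficient $m-n+1$.
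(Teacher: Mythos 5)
Your proof is correct and follows essentially the same route as the paper: realize $H$ as a full submonoid of $\NN_0^{(t)}$, pick a coordinate $j$ witnessing $x^m\nmid y^{n-1}$, and derive a numerical contradiction from $(m+1)x_j\leq ny_j$ together with $m\leq n-1$. The only difference is cosmetic: the paper first deduces $x_j<y_j$ and then contradicts $m\leq n-1$, whereas you multiply the two inequalities at coordinate $j$ (and the integrality sharpening you flag is not even needed, since the strict inequality survives multiplication by $m+1$).
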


\begin{proof}
As $H$ is Krull we can view (Proposition \ref{krull}) $H$ as a full submonoid of $\NN_0^{(t)}$ for some $t\geq 1$. For the sake of contradiction, assume that $x^{m+1} \mid y^n$. With additive notation in $\NN_0^{(t)}$, it follows that $mx_i \leq ny_i$ holds for all $i\in \{1, \ldots, t\}$,  $(m+1)x_i \leq ny_i$ holds for all $i \in \{1, \ldots, t\}$, and $mx_j > (n-1)y_j$ holds for some $j\in \{1, \ldots, t\}$. Choosing such a $j$, we obtain
 \[(m+1)x_j\leq ny_j=(n-1)y_j+y_j<mx_j+y_j,\]
 and so $x_j<y_j$. This implies $(n-1)y_j<mx_j<my_j$, which lead to a contradiction since $m\leq n-1$.
\end{proof}

\bigskip

\section{Examples and Nonexamples}\label{s:examples}

In this section, we provide examples of Krull monoids and some divisor sequences realized by some of their irreducible elements. As will become clear in Section \ref{s:torgleandglom}, we are primarily interested in divisor sequences with nice features: small initial entries and with increases of small size at desired indices. These  examples (and their nice features) allow us, in Section \ref{s:torgleandglom}, to describe a heuristic for constructing new divisor sequences with desired features.  We also give some examples of sequences that cannot be obtained as divisor sequences of atoms in Krull monoids.

\smallskip

Our first example allows for the construction of a divisor sequence whose only values can be $1$ or  $(n+1)$s, where $n \geq 2$ and where the first $n+1$ can appear as early as the second entry.

\smallskip

\begin{theorem}\label{terri}
Let $n,k\in \NN$ with $n,k\geq 2$. There exists a Krull monoid $H$ and an irreducible element $a\in H$ such that $\s(a)=(s_n)_{n\in \mathbb{N}}$ where 
\[
\begin{cases}
 s_i=1, \qquad \text{for} \;i <k,\\
  s_i=n+1, \quad \text{for} \; i\geq k.
\end{cases} 
\]
\end{theorem}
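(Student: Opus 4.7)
The plan is to realize the desired sequence by constructing $H$ as a full submonoid of $\NN_0^n$, using the criterion of Proposition~\ref{krull}(5). Concretely, I would set
\[
H \;=\; \bigl\{(x_1,\ldots,x_n)\in \NN_0^n : x_i\equiv x_j\pmod{k}\text{ for all }i,j\bigr\},
\]
which is the intersection of $\NN_0^n$ with the subgroup of $\ZZ^n$ defined by the same congruences, hence Krull. Take $a=(1,1,\ldots,1)$ and $b_i=k\,e_i$ for $i=1,\ldots,n$. This generalizes Example~\ref{easyexamples}(2), which is the case $n=k=2$.

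First, I would show that $\mcA(H)=\{a,b_1,\ldots,b_n\}$. Every $v=(x_1,\ldots,x_n)\in H$ can be written as $c\cdot a+\sum_{i=1}^n d_i\,b_i$ with $c=\min_i x_i$ and $d_i=(x_i-c)/k\in\NN_0$, so these elements generate $H$. Each is irreducible: a decomposition $a=u+w$ in $H$ forces each coordinate of $u$ to lie in $\{0,1\}$, and the congruence condition forces all these coordinates to agree, so $u\in\{0,a\}$; similarly a decomposition of $b_i$ has its coordinates in positions $j\ne i$ equal to $0$, and the congruence condition makes the $i$-th coordinate a multiple of $k$, giving $u\in\{0,b_i\}$. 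No other atoms arise: if $v\in\mcA(H)\setminus\{a\}$ then $\min_i x_i=0$ (otherwise $v-a\in H$ gives a nontrivial decomposition), so $v=\sum d_i b_i$, and if any $d_j\geq 2$ then $v-b_j\in H$, contradicting irreducibility; thus $v=b_j$ for some $j$.

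Next, I would compute $\s(a)$. Since $a\mid a^j$ for every $j$, the question is which $b_i$ divide $a^j$. The vector $j\cdot a-b_i$ has every coordinate equal to $j$ except the $i$-th, which equals $j-k$; this lies in $\NN_0^n$ exactly when $j\geq k$, and in that case it automatically satisfies the congruence condition and so lies in $H$. Therefore $b_i\mid a^j$ if and only if $j\geq k$. It follows that $s_j=1$ for $j<k$ and $s_j=n+1$ for $j\geq k$, as desired.

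The step that requires the most care is the identification of the atoms, because this depends on knowing that the subgroup of $\ZZ^n$ cut out by the congruences is exactly $\ZZ a+\sum_{i=1}^n \ZZ b_i$, so that every element of $H$ has the explicit nonnegative expression $c\cdot a+\sum d_i b_i$. Once this is established, both the atom count and the divisibility computation are elementary coordinatewise comparisons in $\NN_0^n$.
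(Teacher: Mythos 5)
Your construction is correct, and it is genuinely different from (and leaner than) the paper's. The paper builds $H$ inside $\NN_0^{(N)}$ with $N=\binom{n+k-1}{n-1}$, taking as generators the all-ones vector together with the columns of the matrix of all ordered partitions of $k$ into $n$ parts, and then must verify the normality condition of Proposition \ref{krull}(2) by a somewhat delicate mod-$m$ computation on the relation $\sum_{i=1}^n\mathbf{x}_i=k\mathbf{x}_0$. You instead realize what is essentially the same abstract monoid (generators $a,b_1,\ldots,b_n$ with the single relation $ka=\sum_i b_i$) as the congruence submonoid $\{x\in\NN_0^n: x_i\equiv x_j \pmod{k}\}$, which is visibly full in $\NN_0^n$ (equivalently, equals $G\cap\NN_0^n$ for a subgroup $G\leq\ZZ^n$), so Krullness is immediate from Proposition \ref{krull}(3) or (5), and the atom identification and the computation of $\s(a)$ reduce to coordinatewise checks. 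What your approach buys is a much smaller ambient dimension ($n$ versus $\binom{n+k-1}{n-1}$) and a trivial Krull verification; what the paper's explicit partition-matrix embedding buys is a presentation already in the form used in Setup \ref{glom} (a divisor theory with $\mathbf{x}_0$ nonzero in every coordinate), though your embedding has that feature too after noting $a=(1,\ldots,1)$. Two cosmetic slips, neither affecting correctness: in the ``no other atoms'' step the right condition is $\sum_i d_i\geq 2$ (so that subtracting any $b_j$ with $d_j\geq 1$ gives a nontrivial decomposition), not just ``some $d_j\geq 2$''; and Example \ref{easyexamples}(2) is the case $n=2$, $k=3$ of your family (since $x+2y\equiv 0 \pmod 3$ is $x\equiv y \pmod 3$), not $n=k=2$. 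Also, the closing worry about identifying the subgroup $\ZZ a+\sum_i\ZZ b_i$ is unnecessary: your direct expression $v=c\,a+\sum_i d_i b_i$ with $c=\min_i x_i$ already gives finite generation, and atoms of a reduced monoid must lie in any generating set.
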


\begin{proof}
Let $k$ and $n$ be positive integers, and set $N:=\binom{n+k-1}{n-1}$. There are $N$ ordered partitions of $k$ as a sum of $n$ nonnegative integers. Let $A$ be the $N\times n$ matrix whose rows correspond to the $N$ partitions ordered lexicographically. That is, the first row of $A$ is $\mathbf{a}_1=\begin{bmatrix}k & 0 & \cdots & 0\end{bmatrix}$, the second row of $A$ is $\mathbf{a}_2=\begin{bmatrix}k-1 & 1 & 0 & \cdots & 0\end{bmatrix}$, and the $N$th row of $A$ is $\mathbf{a}_N=\begin{bmatrix}0 & \cdots & 0 & k\end{bmatrix}$.
 Denote the columns of $A$ by $\mathbf{x}_1, \ldots , \mathbf{x}_n$, and set $\mathbf{x}_0$ to be the $N$-dimensional tuple $\begin{bmatrix}1 & \cdots & 1\end{bmatrix}^T$. For each $i\in \{0, \ldots , n\}$, denote by $\mathbf{x}_i[j]$ the $j$th coordinate of the vector $\mathbf{x}_i$which corresponds to the $ij$th entry of the matrix $A$. 
 
 Now let $H$ be the submonoid of $\NN_0^{(N)}$ generated by $\mathbf{x}_0, \ldots , \mathbf{x}_n$; in particular $H$ is the set of all vectors in $\NN_0^{(N)}$ of the form $\sum_{i=0}^nc_i\mathbf{x}_i$ with each $c_i\in \NN_0$. By the construction of the matrix $A$, it is clear that $\sum_{i=1}^n\mathbf{x}_i=k\mathbf{x}_0$.  We will show that $\mathbf{x}_i$ is an irreducible element in $H$, for each $i=1, \dots, n$. Then, as $H$ is generated by the $\mathbf{x}_i$ and since $\sum_{i=1}^n\mathbf{x}_i=k\mathbf{x}_0$, $|\mcA(\mathbf{x}_0)_k|= n+1$. Moreover, for each $i\in \{1, \ldots , n\}$, $\mathbf{x}_{i}[j]=k$ for some $j\in \{1, \ldots , N\}$ and thus there are no $c_i\in \NN_0$ so that $\sum_{i=1}^nc_i\mathbf{x}_i=l\mathbf{x}_0$ if $l<k$. Consequently $|\mcA(\mathbf{x}_0)_l|=1$ if $l<k$ and $\s(\mathbf{x}_0)=(1, \ldots , 1, \overline{n+1})$. 

\smallskip

We now show that each $\mathbf{x}_i$ is an irreducible element in $H$ for $i=0, \dots, n$. Since the  vectors $\mathbf{x}_i$ for $i=1, \dots, N$ generate $M$, we need only show that for each $i=1, \dots, n$, $\mathbf{x}_i$ cannot be written as an $\NN_0$-linear combination of the other $\mathbf{x}_j$s. For each $i\in \{1, \ldots , n\}$, there is $j\in \{1,\ldots, N\}$ such that $\mathbf{x_i}[j]=k>1$. Since $\mathbf{x}_0[j]=1$, $\mathbf{x}_0$ is not an $\NN_0$-linear combination of $\mathbf{x}_1, \ldots , \mathbf{x}_n$.  Conversely, for all $i\in \{1, \ldots , n\}$, there is a $j\in \{1, \ldots , N\}$ such that $\mathbf{x}_i[j]=0$ and so if $\mathbf{x}_i=\sum_{j\not=i}c_j\mathbf{x}_j$ with each $c_j\in \NN_0$, $c_0=0$. Moreover, for each $i\in \{1, \ldots, n\}$, there is $j\in \{1, \ldots, N\}$ such that $\mathbf{x}_i[j]=k$ and $\mathbf{x}_l[j]=0$ for all $l\in \{1, \ldots, n\}\backslash\{i\}$. Thus $\mathbf{x}_i=\sum_{j\not=i}c_j\mathbf{x}_j$ with each $c_j\in \NN_0$ is impossible and so each $\mathbf{x}_i$ is irreducible in $M$.

\smallskip

We  finish the proof by showing that $H$ is indeed a Krull monoid. We show that $H$ is an expanded submonoid of $\NN_0^N$, and hence Krull by Proposition \ref{krull}. Let $\mathbf{y}\in \mathbb ZH$ and suppose that $m\mathbf{y}\in H$ for some $m\in \NN$. We must show that $\mathbf{y}\in H$. Since $m\mathbf{y}\in H$, we can write 
\begin{equation}
\label{doug}m\mathbf{y}=\sum_{i=0}^nc_i\mathbf{x}_i,
\end{equation} 
for some $c_i\in \NN_0$. If necessary, we can use the relation $\sum_{i=1}^n\mathbf{x}_i=k\mathbf{x}_0$ to replace $c_0, c_1, \ldots , c_n$ by $c_0+k, c_1-1, \ldots , c_n-1$. Thus we may assume that $c_j=0$ for some $j\in \{1, \ldots , n\}$. Considering only the first coordinate of the $\mathbf{x}_i$'s, that is the first row of the matrix $A$, $\mathbf{x}_0[1]=1$, $\mathbf{x}_1[1]=k$, and $\mathbf{x}_i[1]=0$ for all $i \in \{2, \ldots , n\}$. Thus, considering only the $1$st coordinate, Equation (\ref{doug}) becomes $m\mathbf{y}[1]=c_0+kc_1$. Similarly, considering rows $2, \ldots, n$ of $A$, Equation (\ref{doug}) becomes $m\mathbf{y}[i]=c_0+c_1(k-1)+c_i$ for each $i\in \{2, \ldots, n\}$. Modulo $m$, these $n$ equations become $0 \equiv c_0+c_1k\equiv c_0+c_1(k-1)+c_i\mod m$, whence $c_i\equiv c_1\mod m$ for all $i\in \{1, \ldots , n\}$. Now since $c_j=0$ for some $j$, $c_i\equiv 0 \mod m$ for all $i\in \{1, \ldots, n\}$. But then $c_0\equiv 0\mod m$ as well and Equation (\ref{doug}) becomes \begin{equation}\label{thereturnofdoug} \mathbf{y}=\sum_{i=0}^n\frac{c_i}{m}\mathbf{x}_i\end{equation} where $\frac{c_i}{m}\in \NN_0$ for each $i\in \{0, \ldots, n\}$. Thus $\mathbf{y}\in H$ and $H$ is Krull.
\end{proof}

\smallskip

The next result is obtained in a similar way and allows for the construction of divisor sequences with an increase of size $1$ in a desired entry. There is, however, a catch in that this increase must happen at an odd index and requires earlier entries in the sequence to be sufficiently large. These sequences will be used to build more general sequences in Section \ref{s:torgleandglom}.

\smallskip

\begin{proposition}[\cite{N11}]\label{zach}
Let $n,k\in \NN$ such that $k$ is odd and $k\geq 3$,  and $n\geq 4$. There exists a Krull monoid $H$ and an irreducible element $a\in H$ with $\s(a)=(s_n)_{n \in \NN}$ such that 
\[
\begin{cases}
s_1=1;\\
s_i=n, 1<i<k,\\
s_i=n+1, i\geq k.
\end{cases}
\]
\end{proposition}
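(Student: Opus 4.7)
The plan is to follow the blueprint of Theorem \ref{terri}: build $H$ as an expanded (hence Krull, by Proposition \ref{krull}(4)) submonoid of some $\NN_0^N$ generated by explicitly constructed vectors $\mathbf{x}_0,\mathbf{x}_1,\ldots,\mathbf{x}_{n-1},\mathbf{z}$, with $\mathbf{x}_0=(1,1,\ldots,1)^T$. Because $H$ will be full in $\NN_0^N$, divisibility of $l\mathbf{x}_0$ by any atom $\mathbf{v}$ reduces to the componentwise condition that the maximum entry of $\mathbf{v}$ is at most $l$. Thus the design goal is to produce $n-1$ atoms $\mathbf{x}_1,\ldots,\mathbf{x}_{n-1}$ whose largest entry equals $2$ (to obtain $s_l=n$ for all $2\leq l\leq k-1$), together with one further atom $\mathbf{z}$ whose largest entry equals $k$ (to supply the one additional divisor of $l\mathbf{x}_0$ precisely when $l\geq k$).

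For the first $n-1$ atoms I would reuse the construction of Theorem \ref{terri} with parameters $k'=2$ and $n'=n-1$: take the $\binom{n}{2}\times(n-1)$ matrix $A$ whose rows are the ordered partitions of $2$ into $n-1$ nonnegative parts, and let $\mathbf{x}_i$ be its $i$th column (extended by additional coordinates to be described next). This automatically produces the relation $\sum_{i=1}^{n-1}\mathbf{x}_i=2\mathbf{x}_0$ and guarantees that each $\mathbf{x}_i$ has some row (the singleton partition $(0,\ldots,0,2,0,\ldots,0)$) in which its entry is $2$. For $\mathbf{z}$ I would then append a new block of coordinates in which every $\mathbf{x}_i$ has $0$ entries and $\mathbf{z}$ carries an entry equal to $k$, together with just enough additional rows to enforce a single relation of the form $\mathbf{z}+\mathbf{v}=k\mathbf{x}_0$ for some nonzero combination $\mathbf{v}=c_0\mathbf{x}_0+\sum c_i\mathbf{x}_i\in H$.

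With this setup the proof would follow the four-step skeleton of Theorem \ref{terri}: (i) verify by componentwise inspection that $\mathbf{x}_0,\mathbf{x}_1,\ldots,\mathbf{x}_{n-1},\mathbf{z}$ are all atoms of $H$; (ii) read off $\s(\mathbf{x}_0)$ by counting atoms with max entry $\leq l$; and (iii) show that $H$ is expanded in $\NN_0^N$ via a modular arithmetic argument on the coefficients, paralleling the treatment of Equation (\ref{doug}) in the proof of Theorem \ref{terri}. The main obstacle is step (i) for $\mathbf{z}$: ruling out any splitting $\mathbf{z}=\mathbf{u}_1+\mathbf{u}_2$ with $\mathbf{u}_1,\mathbf{u}_2\in H\setminus\{0\}$. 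This is precisely where the two hypotheses enter; the oddness of $k$ should force a parity mismatch between the entries of $\mathbf{z}$ in the second block of coordinates and those of any proposed splitting (since any element of $H$ reaching the $k$-entry must do so using $\mathbf{z}$, and the remainder would have entries of mismatched parity), while the lower bound $n\geq 4$ supplies enough atoms $\mathbf{x}_i$ in the first block to absorb all partial sums appearing in step (iii) without creating a spurious new atom at an intermediate power.
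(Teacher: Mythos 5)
The paper gives no proof of this statement (it is quoted from Norwood's thesis \cite{N11}), so your proposal can only be judged on its own terms, and as written it has a genuine structural flaw. Your extra block of coordinates, in which $\mathbf{x}_0$ has entry $1$, each $\mathbf{x}_i$ ($1\leq i\leq n-1$) has entry $0$, and $\mathbf{z}$ has entry $k$, destroys the very relation your construction depends on: in the enlarged space $\sum_{i=1}^{n-1}\mathbf{x}_i$ and $2\mathbf{x}_0$ now disagree in the new coordinates ($0$ versus $2$), so $\sum_{i=1}^{n-1}\mathbf{x}_i=2\mathbf{x}_0$ no longer holds, no $\mathbf{x}_i$ divides $2\mathbf{x}_0$ in the monoid generated by these vectors, and the divisor sequence of $\mathbf{x}_0$ starts $1,1,\ldots$ rather than $1,n,\ldots$. (If instead you extend $\mathbf{x}_0$ by zeros in the new block, then $\mathbf{z}$, having an entry $k>0$ where every power of $\mathbf{x}_0$ has entry $0$, never divides any power of $\mathbf{x}_0$.) More generally, the phrase ``just enough additional rows to enforce a single relation'' has the logic backwards: appending coordinates can only kill linear relations among the generators, never create them. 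The correct move is to \emph{define} $\mathbf{z}:=k\mathbf{x}_0-\mathbf{v}$ for a carefully chosen $\mathbf{v}\in H$ and then arrange, within the existing coordinates (or new coordinates in which the $\mathbf{x}_i$ are \emph{not} all zero), that $\mathbf{z}$ has some entry large enough to block divisibility of $l\mathbf{x}_0$ for $l<k$ --- a design problem your sketch does not carry out.

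Even setting that aside, the parts you defer are exactly the mathematical content of the proposition: that $\mathbf{z}$ is an atom, that no further atoms arise (equivalently, that the generated monoid is saturated in its quotient group, so that it is Krull without enlarging the atom set), and where the hypotheses enter. These cannot be waved at: Proposition \ref{13334} of the paper shows that the analogous sequence with plateau value $3$ is \emph{not} realizable when the jump occurs at an even index, so any construction of this type must use ``$k$ odd'' (and, in your framework, presumably ``$n\geq 4$'') in an essential and verifiable way; your final paragraph only asserts that oddness ``should force a parity mismatch'' without exhibiting the relation or the coordinate in which that mismatch occurs. Relatedly, your reduction of divisibility to ``max entry $\leq l$'' presupposes that the embedding $H\subseteq\NN_0^{(N)}$ is full (a divisor homomorphism), which is an additional property that must be proved for the enlarged monoid, not inherited from the normality argument in Theorem \ref{terri}. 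As it stands, the proposal is an unimplemented plan whose literal implementation produces the wrong sequence, so it does not constitute a proof.
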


\smallskip 

In light of Propositions \ref{terri} and \ref{zach}, it is natural to ask what other sequences $(s_n)_{n\in \NN}$ for which $s_{N+1}-s_N=1$ for some $N\in \NN$, can be realized as divisor sequences of atoms in Krull monoids. The next result gives necessary and sufficient conditions for the realizability of certain such sequences.

\smallskip

\begin{proposition}\label{13334}
Let $\s=(s_i)_{i \in \NN}$ be a nondecreasing sequence of positive integers that eventually stabilizes such that 
\[
\begin{cases}
s_1=1;\\
s_i=3, 1<i<n-1;\\
s_n=4.
\end{cases}
\]

There exists   an irreducible element $x$  in a Krull monoid $H$  such that $\s=\s(x)$ if and only if $n$ is odd.
\end{proposition}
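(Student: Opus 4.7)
The proposition is an equivalence; I would treat both directions.

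For the \emph{necessary direction}, suppose $x$ is an atom in a Krull monoid $H$ with $\s(x)=(1,3,\ldots,3,4,\overline{4})$ and the first $4$ at index $n$. I would let $\mcA_{n-1}(x)=\{x,y,z\}$ and $w\in\mcA_n(x)\setminus\mcA_{n-1}(x)$; Lemma~\ref{lemma:minimalpower} then gives $w^2\nmid x^n$. Viewing $H$ as a full submonoid of $\NN_0^{(t)}$ via Proposition~\ref{krull}, I would first show via a case analysis of factorizations $2x=\beta y+\gamma z$, combined with the one-atom-per-ray consequence of fullness, that the defining relation must be $2x=y+z$. Next, I would factor $nx=w+ax+by+cz$; after reducing via $y+z=2x$ and relabeling $y\leftrightarrow z$ so that $b=0$, the hypothesis $w\nmid x^{n-1}$ forces $a=0$, and the case $c=1$ is excluded since then $w-x=(n-1)x-z\ge 0$ would give $x\mid w$. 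Thus $nx=w+cz$ with $c\ge 2$. Writing $n=2m$ (aiming for contradiction, assume $n$ is even), this rearranges to $w=my+(m-c)z$, whose atomicity requires $c>m$, hence $c\ge m+1$; substituting $z=2x-y$ yields $w=cy-2(c-m)x$.

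The crucial step will be to show that $c$ must be odd: if $c$ were even, then $w/2=(c/2)y-(c-m)x$ would lie in $\ZZ\langle x,y\rangle\subseteq\ZZ H$ with each coordinate $w_i/2$ a non-negative integer (because $w_i=(n-2c)x_i+cy_i$ is then a sum of even integers). Proposition~\ref{krull}(2) would then yield $w/2\in H$, contradicting $w$'s atomicity. With $c$ odd, I would define $g:=(m+1)x-\tfrac{c+1}{2}z$, verify (using $z_i/x_i\le 2m/c<2(m+1)/(c+1)$, where the second inequality follows from $m<c$) that $g\ge 0$ componentwise and hence $g\in H$ by fullness, and note that $(m+1)x-g=\tfrac{c+1}{2}z\in H$ shows $g\mid (m+1)x$. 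Since $n\ge 4$ gives $m+1<n$, transitivity of divisibility together with $w\notin\mcA_{n-1}(x)$ forces $w\nmid g$; any atomic decomposition of $g$ must therefore lie in $\mcA_{m+1}(x)=\{x,y,z\}$. Checking directly that $g=\alpha x+\beta y+\gamma z$ in $(x,z)$-coordinates forces $\alpha=m-c<0$ (impossible), and using non-collinearity of $x$ and $z$ to rule out $g\in\{x,y,z\}$, I would conclude that $g$ is a new atom in $\mcA_{m+1}(x)$, contradicting $s_{m+1}=3$.

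For the \emph{sufficient direction} (odd $n$), I would construct an explicit Krull monoid. When $n=3$, let $H$ be the full submonoid $\{(u,v)\in\NN_0^2 : u+v\equiv 0\pmod 3\}$; a Hilbert-basis computation yields $\mcA(H)=\{x,y,z,w\}=\{(1,2),(2,1),(0,3),(3,0)\}$, and one verifies $2x=y+z$, $3x=w+2z$, giving $\s(x)=(1,3,\overline{4})$. For larger odd $n=2m+1$, the construction would generalize via analogous two-dimensional lattice data in $\NN_0^2$ (or a larger ambient $\NN_0^N$), with $c=m+1$ satisfying $c>m$; the odd parity of $n$ is precisely what prevents the obstructing element $g$ from being forced into $\ZZ H$, since $(n-2c)/2$ fails to be an integer when $n$ is odd. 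The hardest part will be in the necessary direction: confirming that $g$ is a genuinely new atom, rather than decomposing via some other atom of $H$; this relies on the transitivity argument above, confined by the hypothesis $\mcA_k(x)=\{x,y,z\}$ for all $k<n$.
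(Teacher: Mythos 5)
Your \emph{necessity} argument (realizable $\Rightarrow n$ odd) is correct, but it takes a genuinely different route from the paper's. You work inside a full embedding $H\subseteq \NN_0^{(t)}$ and argue geometrically: componentwise inequalities give $2x=y+z$ and $nx=w+cz$ with $c\geq 2$; atomicity of $w$ forces $c>m$ when $n=2m$; a normality/halving argument (Proposition \ref{krull}(2), or fullness directly) rules out $c$ even; and for $c$ odd the auxiliary element $g=(m+1)x-\tfrac{c+1}{2}z$ lies in $H$ by fullness, divides $x^{m+1}$ with $m+1\leq n-1$, yet admits no decomposition into $\{x,y,z\}$ by coefficient comparison, contradicting $s_{m+1}=3$. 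The paper instead argues purely factorization-theoretically: it first shows the only factorizations of $x^i$ for $i<n$ are $x^{i-2j}y^jz^j$, then for $n=2k$ writes $x^n=y^kz^k$ and uses Lemma \ref{lemma:minimalpower} twice (to cap the exponents of $w$ and of $y$) to reduce any factorization of $x^n$ involving $w$ to $x^n=wy^b$ with $b\leq k$, whence $w=y^{k-b}z^k$, a contradiction. Both are valid; yours trades the classification of low-level factorizations for convex-geometric estimates. Two small points: the comparison ``in $(x,z)$-coordinates'' requires the $\mathbb{Q}$-linear independence of $x$ and $z$ (your one-atom-per-ray remark does supply this, but it is doing real work there, not just in ruling out $g\in\{x,y,z\}$), and the conclusion is $\alpha\leq m-c$ rather than $\alpha=m-c$; neither affects the argument.

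The genuine gap is in the \emph{sufficiency} direction. You construct only the case $n=3$ and assert that the construction ``would generalize via analogous two-dimensional lattice data,'' but no construction is given, and the most natural generalization of your example fails: in $\{(u,v)\in\NN_0^{(2)}\colon u+v\equiv 0 \pmod n\}$ with $n=5$, the element $(1,4)$ already acquires a fourth irreducible divisor at exponent $3$ (namely $(3,2)$, since $3(1,4)-(3,2)=(0,10)$ lies in the monoid), and its divisor sequence is $(1,3,4,5,\overline{6})$, not $(1,3,3,3,\overline{4})$. What is actually needed is the weighted congruence $u+mv\equiv 0\pmod{2m+1}$, i.e.\ the block monoid $\mathcal B(\mathbb Z_n,\{1,m\})$ used in the paper: by the Chapman--Smith classification its only atoms are $1^n$, $1m^2$, $1^{m+1}m$, and $m^n$, and one checks $\s(1^{m+1}m)=(1,3,\ldots,3,\overline{4})$ with the first $4$ at index $n$. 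Your parity remark (that $(n-2c)/2$ is not an integer for odd $n$) only explains why the even-$n$ obstruction disappears; it does not produce a monoid, so the ``if'' half of the proposition is missing from your proposal.
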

Before we prove the proposition, recall that the block monoid $\mathcal B(\mathbb Z_n, \{1, m\})$ is the submonoid of the free monoid $\mathcal F(\{1,m\})$ containing formal products of $1$s and $m$s whose elements would sum to zero in $\mathbb Z_n$. Each of its elements has the form $1^am^b$, with $0\leq a,b\leq n$ and $a+mb=nk$ for some $k\geq 1$. 
\begin{proof}
Write $n=2m+1$ for a positive integer $m$.  By \cite[Theorem 2.1]{CS03}, $\mathcal B(\mathbb Z_n, \{1, m\})$ has precisely four irreducible elements: $1^n, 1^1m^2, 1^{m+1}m, m^n$. It is then easy to see that  if $\s(1^{m+1}m)= (s_n)_n$, then
\[
\begin{cases}
s_0=1;\\
s_i=3, 1\leq i\leq 2m;\\
s_i=4, i>2m.
\end{cases}
\]
In particular, if $n$ is odd, $\s=\s(x)$ where $x$ is an irreducible element in a Krull monoid $\mathcal B(\mathbb Z_n, \{1, m\})$.

\smallskip

Now suppose that $H$ is a Krull monoid and that $x\in H$ is irreducible with $\s(x)=\s$. Since $|\A_2(x)|=3$ and $H$ is Krull,  we can write $x^2=yz$ for  irreducible elements $y$ and $z$ in $H$. We note that $x^2\not=x^my^n$ for any $m,n\geq 2$ by Lemma \ref{lemma:minimalpower}. We claim that for any $i\in\{2, \ldots, n-1\}$, $y^j\mid x^i$ if and only if $z^j\mid x^i$; which implies that the only factorizations of $x^i$ have the form $x^{i-2j}y^jz^j$ for $0 \leq j\leq i/2$. If $x^i=x^ay^bz^c$, then $x^{i-a}=y^bz^c$, so we may assume that $a=0$; that is, $x^i=y^bz^c$. Write $i=2d+\epsilon$ where $\epsilon\in \{0,1\}$. Then $x^\epsilon y^dz^d=y^bz^c$. If $d\leq b,c$, then $x^\epsilon=y^{b-d}z^{c-d}$ which is a contradiction unless $b=c=d$. If $d\geq b,c$, then $x^\epsilon y^{d-b}z^{d-c}=1$ which is a contradiction unless $\epsilon=1$ and $b=c=d$. Thus, without loss of generality, we may assume that $c\leq d\leq b$. Now $x^{\epsilon}z^{d-b}=y^{b-d}$, and so $x^{2d-2c+\epsilon}=x^{\epsilon}z^{d-c}y^{d-c}=y^{b-c}$, which since $H$ is Krull, is a contradiction unless $b=c=d$.

\smallskip

Suppose, for the sake of contradiction, that $n=2k$ is even. Then $x^n=y^kz^k$. Since $|\A_n(x)|=4$ and $H$ is Krull, $x^n=wx^ay^bz^c$ for $a,b,c\geq 0$. Again by Lemma \ref{lemma:minimalpower}, the exponent on $w$ cannot be larger than $1$ since $w\nmid x^{n-1}$. Since $|\A_i(x)|<4$ for $i<n$, $a=0$. Without loss of generality, assume $b\geq c$, so that $x^{n+1}=wy^{b-c}y^cz^c=wy^{b-c}x^{2c}$. Then $x^{n+1-2c}=wy^{b-c}$, which is impossible unless $c=0$. Thus the only factorization of $x^n$ that involves $w$ is $x^n=wy^b$ for $b\geq 1$. Now $wy^b=y^kz^k$. Since $y^k \mid x^n$ and $y^k \nmid x^{n-1}$, $y^{k+1}\nmid x^n$ by Lemma \ref{lemma:minimalpower}. Thus $b \leq k$ and so $wy^b=y^kz^k$ becomes $w=y^{k-b}z^k$, a contradiction. Therefore $n$ must be odd if $\s=\s(x)$ for an irreducible $x$ in a Krull monoid $H$.
\end{proof}

\smallskip

The next proposition and example give a partial generalization of Proposition \ref{13334}.

\smallskip

\begin{proposition}\label{second134}
Let $m<n$ be positive integers and let $\s=(s_i)_{i\in \NN}$ be a nondecreasing sequence of positive integers that eventually stabilizes such that
\[
\begin{cases}
s_i=1, 1\leq i<m;\\
s_i=3, m\leq i<n;\\
s_n=4.
\end{cases}
\]
If $\s=\s(x)$ where $x$ is an irreducible element in a Krull monoid $H$, then $m\nmid n$.
\end{proposition}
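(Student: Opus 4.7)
The plan is to adapt the proof of Proposition \ref{13334}, which handles the case $m = 2$, to arbitrary $m$. The first task is to pin down the factorization $x^m = y z$, where $y$ and $z$ are the two atoms that first divide $x^m$. Since $s_{m-1} = 1$, neither $y$ nor $z$ divides $x^{m-1}$, and Lemma \ref{lemma:minimalpower} forces the $y$- and $z$-exponents in any atomic factorization of $x^m$ to be at most $1$. Any factorization containing $y$ then has the form $x^m = x^a y z^c$ with $c \in \{0,1\}$; the case $c = 0$ would give $y = x^{m-a}$, impossible because $y$ and $x$ are distinct atoms and $m \geq 2$. So $c = 1$, and a final appeal to $y \nmid x^{m-1}$ pins down $a = 0$, whence $x^m = y z$.

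With $x^m = y z$ in hand, the next step is to enumerate the factorizations of $x^i$ for $m \leq i < n$. Since $s_i = 3$, the atoms dividing $x^i$ are exactly $\{x, y, z\}$, so any factorization has the form $x^i = x^a y^b z^c$. Assuming WLOG $b \geq c$ and substituting $y z = x^m$ collapses this to $x^{i - a - m c} = y^{b - c}$. In a Krull monoid two distinct atoms cannot satisfy a nontrivial power relation: viewing $H$ as a full submonoid of $\NN_0^t$ via Proposition \ref{krull}, an equation $p y = q x$ with $p, q \geq 1$ would force $x$ to be a positive integer multiple of a common nonzero vector lying in $H$, contradicting irreducibility. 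Hence $b = c$ and the only factorizations of $x^i$ are $x^{i - j m} y^j z^j$ for $0 \leq j \leq i / m$.

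Now suppose for contradiction that $m \mid n$, and write $n = k m$ with $k \geq 2$ (forced because $n > m$). Since $s_n = 4$, there is a new atom $w$ with $w \mid x^n$ but $w \nmid x^{n-1}$, and Lemma \ref{lemma:minimalpower} caps the $w$-exponent in any factorization of $x^n$ at $1$. A factorization involving $w$ thus has the form $x^n = w x^a y^b z^c$; since $w$ first divides $x^i$ at $i = n$, the exponent $a$ must be $0$, and after the WLOG reduction $b \geq c$ and another application of $y z = x^m$, the same principle yields $c = 0$. Thus $x^n = w y^b$ for some $b \geq 1$.

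To finish, I would bound $b$ from above. From the enumeration at level $n - 1$, every factorization of $x^{n-1}$ has $y$-exponent at most $\lfloor (n-1)/m \rfloor = k - 1$, and no factorization of $x^{n-1}$ uses $w$, so $y^k \nmid x^{n-1}$, while $y^k \mid x^n$ via $(y z)^k = x^n$. Lemma \ref{lemma:minimalpower} then forces $b \leq k$. Comparing $x^n = w y^b$ with the factorization $x^n = y^k z^k$ yields $w = y^{k - b} z^k$, a nontrivial product of at least $k \geq 2$ atoms, contradicting the irreducibility of $w$. The main obstacle I expect is controlling all factorizations at every level $i \leq n$ tightly enough to pin down both $y^k \mid x^n$ and $y^k \nmid x^{n-1}$; the Krull hypothesis, applied through Proposition \ref{krull} and Lemma \ref{lemma:minimalpower}, is precisely what rules out exotic relations among powers of distinct atoms and allows the enumeration to close.
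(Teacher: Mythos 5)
Your proof is correct and takes essentially the same route as the paper's: the paper likewise enumerates the factorizations of $x^i$ for $m\leq i<n$ as $x^{i-mj}y^jz^j$ (by the argument of Proposition \ref{13334}), isolates the factorization $x^n=wy^b$ with the $w$-exponent capped at $1$, and then, assuming $m\mid n$ with $n=qm$, uses $y^q\mid x^n$, $y^q\nmid x^{n-1}$ and Lemma \ref{lemma:minimalpower} to force $b\leq q$ and the contradiction $w=y^{q-b}z^q$. The only difference is cosmetic: you re-justify (a bit loosely, via parallel vectors) the fact that distinct atoms of a Krull monoid satisfy no relation $y^p=x^q$, which the paper already establishes in Section \ref{s:prelim} using fullness and the componentwise order.
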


\begin{proof}
Suppose that $H$ is a Krull monoid and that $x\in H$ is irreducible with $\s(x)=\s$. Write $n=mq+r$ with $q\geq 1$ and $0\leq r < m$. Since $|\A_i(x)|=3$ for all $i\in \{m, \ldots, n-1\}$, an argument similar to that in the proof of Proposition \ref{13334} shows that the only factorizations of $x^i$, for $i\in \{m, \ldots, n-1\}$, have the form $x^{i-mj}y^jz^j$. Since $|\A_n(x)|=4$, an argument similar to that in the proof of Proposition \ref{13334} gives the existence of an irreducible element $w$ such that $x^n=wy^b$ with $b\geq 1$ is the only factorization of $x^n$ involving $w$. If $m\mid n$, then $r=0$ and $wy^b=x^n=y^qz^q$. Since $y^q\mid x^n$ and $y^q\nmid x^{n-1}$, Lemma \ref{lemma:minimalpower} implies that $y^{q+1}\nmid x^n$, so that $b\leq q$. Thus $w=y^{q-b}z^q$, a contradiction. Thus $m\nmid n$.
\end{proof}

\smallskip

We conclude this section with one final example illustrating the existence of divisor sequences of the form $(1, 1, \ldots, 1, 3, 3, \ldots, 3, \overline{4})$ provided the positions of the first $3$ and first $4$ do not violate the conditions of Proposition \ref{second134}.

\smallskip

\begin{example}
Let $m$ be an odd integer and set $n=(3m+1)/2$. Note that $\gcd(m,n)=1$ and that $n<2m$. Let $H=\mathcal B(\mathbb Z_n, \{1,m\})$. The elements of $H$ all have the form $1^am^b$ with $0\leq a,b\leq n$ and $a+mb=nk$ for some $k\geq 1$. By \cite[Theorem 2.1]{CS03}, the irreducible elements of $H$ are: $m^n, 1m^3, 1^{n-m}m, 1^n$. It is then easy to check that $\s(1m^3)=(\underbrace{1, \ldots, 1}_{n-m-1}, \underbrace{3, \ldots, 3}_{m}, \overline{4})$ and $\s(1^{n-m}m)=(1, 1, \underbrace{3, \ldots, 3}_{n-3}, \overline{4})$.
\end{example}

\bigskip

\section{Glomming and its Consequences}\label{s:torgleandglom}

Suppose that we have constructed two Krull monoids $H_1$ and $H_2$ and irreducible elements $x\in H_1$ and $y\in H_2$ with divisor sequences respectively equal to $\s(x)$ and $\s(y)$.  The goal of this section is to glom together $H_1$ and $H_2$ to form a new Krull monoid $\glom(H_1, H_2)$ containing an irreducible element $z$ with divisor sequence $\s(z)$ that can be computed from $\s(x)$ and $\s(y)$.

To this end, we consider first differences of divisor sequences. Given a divisor sequence $\s(h)=(s_1, s_2, \ldots)$, define the \emph{first difference sequence}  $\partial \s = [s_{i+1}-s_i]_{i\geq 1}$. In this section we will use the notation from the following setup.

\smallskip

\begin{setup}\label{glom}
Let $H_1$ be a Krull monoid so that the inclusion $H_1 \subseteq \NN_0^{(t_1)}$ is a divisor theory. That is, whenever $h\in \ZZ H_1$ and whenever $B\in \NN$ with $Bh\in H$, then $h\in H_1$. Let $\mathbf{x}_0,\mathbf{x}_1, \ldots, \mathbf{x}_m$ form a set of irreducible elements of $H_1$ so that the only atoms of $H_1$ that appear in factorizations of sums of $\mathbf{x}_0$ are the $\mathbf{x}_i$. Similarly, let $H_2$ be a Krull monoid with divisor theory $H_2\subseteq \NN_0^{(t_2)}$, and let $\mathbf{y}_0, \mathbf{y}_1, \ldots , \mathbf{y}_n$ form a set of irreducible elements so that the only atoms of $H_2$ that appear in factorizations of sums of $\mathbf{y}_0$ are the $\mathbf{y}_j$. Without loss of generality, we may assume that $\mathbf{x}_{0}[k]\not=0$ for all $k=1, \dots, t_1$ (and similarly, we may assume $\mathbf{y}_{0}[k]\not=0$ for all indices $k=1, \dots, t_2$). Indeed, if $\mathbf{x}_{0}[j]=0$ for some $j$, then $\mathbf{x}_{i}[j]=0$ for all $i=1, \dots, m$. In this situation, one could simply replace $H_1$ with its projection ignoring the $j$th component.

\smallskip

 We now glom together $H_1$ and $H_2$ to form the new monoid $\glom(H_1, H_2) \subseteq \mathbb N_0^{(t_1)}\otimes_{\mathbb N_0}\mathbb N_0^{(t_2)}$. Set

  \begin{eqnarray*}
  \glom(H_1, H_2)&=&\langle \mathbf{x}_0\otimes \mathbf{y}_0, \{\mathbf{x}_i\otimes \mathbf{y}_0\colon 1\leq i\leq m\}, \{\mathbf{x}_0\otimes \mathbf{y}_j\colon 1\leq j\leq n\}\rangle \\ 
  &=&\left\{a(\mathbf{x}_0\otimes \mathbf{y}_0)+\sum_{i=1}^m b_i(\mathbf{x}_i\otimes \mathbf{y}_0)+\sum_{j=1}^n c_j(\mathbf{x}_0\otimes \mathbf{y}_j)\colon a, b_i, c_j\in \mathbb N_0\right\}\subseteq \NN_0^{(t_1)}\otimes_{\NN_0}\NN_0^{(t_2)}.
  \end{eqnarray*} 
 That is, $\glom(H_1, H_2)\subseteq \NN_0^{(t_1)}\otimes_{\NN_0}\NN_0^{(t_2)}\cong \NN_0^{(t_1+t_2)}$ is contained in a tensor product $H_1\otimes_{\NN_0}H_2$ of two commutative Krull semigoups. We note that tensor products of semigroups always exist and satisfy the usual universal properties; see \cite{G69}. 
 \end{setup}
 
 \smallskip
 
 We can now state the Fundamental Theorem of Glomming.

\smallskip

\begin{theorem}[Fundamental Theorem of Glomming]\label{theorem:FTG}
Let $H_1\subseteq \NN_0^{(t_1)}$ and $H_2\subseteq \NN_0^{(t_2)}$ be as in Setup \ref{glom} with $\s(\mathbf{x}_0)=(s_1, s_2, s_3, \ldots )$ and $\s(\mathbf{y}_0)=(t_1, t_2, t_3, \ldots)$. Then  the following hold:
\begin{enumerate}
\item $\mathbf{x}_0\otimes\mathbf{y}_0$ is an irreducible element in $\glom(H_1, H_2)$,
\item  $\glom(H_1, H_2)$ is a Krull semigroup, and 
\item $\s(\mathbf{x}_0\otimes\mathbf{y}_0)=(s_1+t_1-1, s_2+t_2-1, s_3+t_3-1, \ldots)$. 
\end{enumerate}
In particular, as sequences, $\partial \s(\mathbf{x}_0\otimes\mathbf{y}_0)=\partial \s(\mathbf{x}_0)+\partial \s(\mathbf{y}_0)$.
\end{theorem}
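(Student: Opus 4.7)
The plan is to establish the three assertions by exploiting a single coordinate-level observation: every element of $\glom(H_1, H_2)$ can be written as $\mathbf{X} \otimes \mathbf{y}_0 + \mathbf{x}_0 \otimes \mathbf{Y}$, where $\mathbf{X}$ is a nonnegative integer combination of $\mathbf{x}_0, \ldots, \mathbf{x}_m$ in $H_1$ and $\mathbf{Y}$ is a nonnegative integer combination of $\mathbf{y}_1, \ldots, \mathbf{y}_n$ in $H_2$. Because $(\mathbf{x} \otimes \mathbf{y})[k,l] = \mathbf{x}[k]\,\mathbf{y}[l]$ in the tensor product and the standing hypothesis ensures $\mathbf{x}_0[k], \mathbf{y}_0[l] > 0$ for every $k,l$, comparing coordinates lets us extract proportionality constants.

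For (1), I will suppose $\mathbf{x}_0 \otimes \mathbf{y}_0 = u + v$ in $\glom(H_1, H_2)$. Writing $u = \mathbf{X}_u \otimes \mathbf{y}_0 + \mathbf{x}_0 \otimes \mathbf{Y}_u$ and $v = \mathbf{X}_v \otimes \mathbf{y}_0 + \mathbf{x}_0 \otimes \mathbf{Y}_v$ and setting $\mathbf{X} = \mathbf{X}_u + \mathbf{X}_v$, $\mathbf{Y} = \mathbf{Y}_u + \mathbf{Y}_v$, the identity at coordinates $(k,l)$ becomes
\[
\frac{\mathbf{X}[k]}{\mathbf{x}_0[k]} + \frac{\mathbf{Y}[l]}{\mathbf{y}_0[l]} = 1 \qquad \text{for every } k, l,
\]
forcing $\mathbf{X} = (1-q)\mathbf{x}_0$ and $\mathbf{Y} = q\mathbf{y}_0$ for some rational $q \in [0,1]$. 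If $q \in (0,1)$, then $(1-q)\mathbf{x}_0 \in H_1$ and the difference $q\mathbf{x}_0 = \mathbf{x}_0 - (1-q)\mathbf{x}_0$ lies in $\ZZ H_1$; since some integer multiple of $q\mathbf{x}_0$ is a positive integer multiple of $\mathbf{x}_0 \in H_1$, the normality clause of Proposition~\ref{krull} places $q\mathbf{x}_0$ in $H_1$, and then $\mathbf{x}_0 = (1-q)\mathbf{x}_0 + q\mathbf{x}_0$ contradicts the irreducibility of $\mathbf{x}_0$. If $q = 1$, the atom $\mathbf{y}_0$ would be a nonnegative integer combination of $\mathbf{y}_1, \ldots, \mathbf{y}_n$, again impossible. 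Thus $q = 0$, and the irreducibility of $\mathbf{x}_0$ in $H_1$ forces exactly one of $u, v$ to vanish.

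For (2), $\glom(H_1, H_2)$ is finitely generated by construction and positive, so by Proposition~\ref{krull}(2) only normality remains. Given $g \in \ZZ\,\glom(H_1, H_2)$ with $Ng \in \glom(H_1, H_2)$, the same decomposition $g = \mathbf{X} \otimes \mathbf{y}_0 + \mathbf{x}_0 \otimes \mathbf{Y}$ is available, now with $\mathbf{X}$ a $\ZZ$-combination of $\mathbf{x}_0, \ldots, \mathbf{x}_m$ and $\mathbf{Y}$ a $\ZZ$-combination of $\mathbf{y}_1, \ldots, \mathbf{y}_n$; since $Ng \in \glom(H_1, H_2)$ forces $N\mathbf{X}$ and $N\mathbf{Y}$ into the respective positive cones, the saturation properties of $H_1 \subseteq \NN_0^{(t_1)}$ and $H_2 \subseteq \NN_0^{(t_2)}$ lift $\mathbf{X}$ and $\mathbf{Y}$ themselves into those cones, placing $g$ in $\glom(H_1, H_2)$.

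For (3), I will show the atoms of $\glom(H_1, H_2)$ dividing $(\mathbf{x}_0 \otimes \mathbf{y}_0)^n$ are precisely
\[
\{\mathbf{x}_0 \otimes \mathbf{y}_0\} \cup \{\mathbf{x}_i \otimes \mathbf{y}_0 : \mathbf{x}_i \in \mcA_n(\mathbf{x}_0) \setminus \{\mathbf{x}_0\}\} \cup \{\mathbf{x}_0 \otimes \mathbf{y}_j : \mathbf{y}_j \in \mcA_n(\mathbf{y}_0) \setminus \{\mathbf{y}_0\}\},
\]
so that the count is $1 + (s_n - 1) + (t_n - 1) = s_n + t_n - 1$. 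Atomicity of each listed generator in $\glom(H_1, H_2)$ follows from an argument parallel to (1). In the easy direction of the divisibility claim, a factorization $n\mathbf{x}_0 = \mathbf{x}_i + h$ in $H_1$ has $h$ supported on $\mathbf{x}_0, \ldots, \mathbf{x}_m$ by the hypothesis in Setup~\ref{glom}, so tensoring with $\mathbf{y}_0$ exhibits a factorization entirely inside $\glom(H_1, H_2)$. In the reverse direction, a factorization in $\glom(H_1, H_2)$ together with the coordinate analysis of (1) produces a relation of the form $(n-\alpha)\mathbf{x}_0 - \mathbf{x}_i - \sum \beta_k \mathbf{x}_k = c\mathbf{x}_0$ in $\NN_0^{(t_1)}$ for some rational $c \geq 0$; clearing denominators and invoking normality of $H_1$ recovers $\mathbf{x}_i \mid n\mathbf{x}_0$ in $H_1$. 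A symmetric argument handles $\mathbf{x}_0 \otimes \mathbf{y}_j$, and the first-difference identity $\partial\s(\mathbf{x}_0 \otimes \mathbf{y}_0) = \partial\s(\mathbf{x}_0) + \partial\s(\mathbf{y}_0)$ is an immediate consequence. The main obstacle throughout is converting the rational proportionalities produced by matching tensor-product coordinates into genuine integer-level divisibilities inside $H_1$ and $H_2$; in each case this is resolved only by combining the normality (Krull) property of $H_1, H_2$ with the irreducibility of $\mathbf{x}_0, \mathbf{y}_0$.
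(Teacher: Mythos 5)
Your overall strategy coincides with the paper's: both proofs reduce equalities in $\glom(H_1,H_2)$ to coordinatewise statements by applying $\NN_0$-bilinear maps (your ``coordinates'' $(\mathbf{x}\otimes\mathbf{y})[k,l]=\mathbf{x}[k]\mathbf{y}[l]$ are exactly the paper's maps $\beta$ and $\gamma_{k,l}$), and then convert the resulting rational proportionalities into statements inside $H_1$ and $H_2$ via fullness and normality. Your treatment of (1) and (3) --- the separation-of-variables identity $\mathbf{X}[k]/\mathbf{x}_0[k]+\mathbf{Y}[l]/\mathbf{y}_0[l]=\mathrm{const}$, followed by normality applied to $c\,\mathbf{x}_0$ and irreducibility of $\mathbf{x}_0$, $\mathbf{y}_0$ --- is sound and is, if anything, a cleaner packaging of the paper's incomparability arguments; the easy direction of (3) correctly uses the hypothesis of Setup \ref{glom} that only the $\mathbf{x}_i$ (resp.\ $\mathbf{y}_j$) occur in factorizations of multiples of $\mathbf{x}_0$ (resp.\ $\mathbf{y}_0$).

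The genuine gap is in (2). Comparing your representation $Ng=N\mathbf{X}\otimes\mathbf{y}_0+\mathbf{x}_0\otimes N\mathbf{Y}$ with a nonnegative representation $Ng=\mathbf{K}_1\otimes\mathbf{y}_0+\mathbf{x}_0\otimes\mathbf{K}_2$ coming from $Ng\in\glom(H_1,H_2)$, the coordinate analysis yields only $N\mathbf{X}=\mathbf{K}_1+c\,\mathbf{x}_0$ and $N\mathbf{Y}=\mathbf{K}_2-c\,\mathbf{y}_0$ for a single rational constant $c$: the two decompositions of $Ng$ may differ by a ``transfer'' along $\mathbf{x}_0\otimes\mathbf{y}_0$. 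Your assertion that $Ng\in\glom(H_1,H_2)$ ``forces $N\mathbf{X}$ and $N\mathbf{Y}$ into the respective positive cones'' therefore does not follow as stated: for $c>0$ the vector $N\mathbf{Y}$, and for $c<0$ the vector $N\mathbf{X}$, can a priori have negative coordinates --- their positivity is essentially the thing being proved, and it is exactly this point that consumes the paper's effort in its Krull argument. To repair your sketch you must control $c$ explicitly (for instance, show via normality, as in your part (1), that $c\,\mathbf{x}_0\in H_1$ when $c\geq 0$ and $-c\,\mathbf{y}_0\in H_2$ when $c\leq 0$) and then rebalance the representation of $g$ itself by a suitable multiple of $\mathbf{x}_0\otimes\mathbf{y}_0$ before invoking normality of $H_1$ and $H_2$; note also that concluding $g\in\glom(H_1,H_2)$ from $\mathbf{X}\in H_1$ and $\mathbf{Y}\in H_2$ silently requires $\mathbf{X}$ and $\mathbf{Y}$ to be $\NN_0$-combinations of the designated atoms $\mathbf{x}_0,\ldots,\mathbf{x}_m$ and $\mathbf{y}_1,\ldots,\mathbf{y}_n$ rather than arbitrary elements of $H_1$, $H_2$ (a step the paper also passes over quickly). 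As written, part (2) of your proposal asserts the crux rather than proving it.
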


\smallskip

In light of Proposition \ref{krull}, given any two Krull semigroups $H_1$ and $H_2$ and irreducible elements $x\in H_1$ and $y\in H_2$, we can view $x$ as $\mathbf{x}_0$ and $y$ as $\mathbf{y}_0$ in Setup \ref{glom}. Consequently, there is an irreducible element $z$ in some other Krull semigroup so that $\partial \s(z)=\partial \s(x)+\partial \s(y)$. We will use this observation to build new divisor sequences at the end of this section.

\smallskip

\begin{proof}
First observe that if $a\mathbf{x}_0=\sum_{i=1}^mb_i\mathbf{x}_i$ for some $a,b_i\in \NN_0$, then 

$$a(\mathbf{x}_0\otimes \mathbf{y}_0)=\left(a\mathbf{x}_0\right)\otimes \mathbf{y}_0=\left(\sum_{i=1}^mb_i\mathbf{x}_i\right)\otimes \mathbf{y}_0=\sum_{i=1}^mb_i(\mathbf{x}_i\otimes \mathbf{y}_0).$$ 

Similarly, if $a\mathbf{y}_0=\sum_{j=1}^nc_i\mathbf{y}_j$ for some $a,c_j\in \NN_0$, then 

$$a(\mathbf{x}_0\otimes \mathbf{y}_0)=\mathbf{x}_0\otimes \left(a\mathbf{y}_0\right)=\mathbf{x}_0\otimes\left(\sum_{j=1}^nc_j\mathbf{y}_j\right)=\sum_{j=1}^nc_j(\mathbf{x}_0\otimes \mathbf{y}_j).$$ 

Thus relations from $H_1$ and $H_2$ are preserved in $\glom(H_1, H_2)$. We now show that no additional relations among the generators of $\glom(H_1, H_2)$ can occur. Suppose that 

$$a(\mathbf{x}_0\otimes \mathbf{y}_0)=\sum_{i=1}^mb_i(\mathbf{x}_i\otimes \mathbf{y}_0)+\sum_{j=1}^nc_j(\mathbf{x}_0\otimes \mathbf{y}_j),$$ 

\noindent for some $a,b_i, c_j\in \NN_0$. We now construct a bilinear map $\beta\colon H_1\times H_2\rightarrow \NN_0^{(t_1t_2)}$. For each $i=0, \dots, m$ consider the elements in $\overline{\mathbf{x}}_i \in \NN_0^{(t_1t_2)}$ formed by repeating $t_2$ times the element $\mathbf{x}_i$. That is, if  we denote the $j$th entry of $\mathbf{x}_i$ by $\mathbf{x}_i[j]$  then $ \overline{\mathbf{x}}_i[k]= \mathbf{x}_i[h]$   for each $k=1, \dots, t_1t_2$  where $h$ is the unique positive integer such that $k=qt_1+h$ for some $q\in \mathbb N_0$.

 Similarly, for each $\mathbf{y}_i$, one can build $\overline{ \mathbf{y}}_i\in \NN_0^{(t_1t_2)}$ by repeating $t_1$ times each of the entries of $\mathbf{y}_i$. Now define $\beta(\mathbf{x}_i, \mathbf{y}_j)=\overline{\mathbf{x}}_i\ast\overline{\mathbf{y}}_j$ where $\ast$ denotes the standard component-wise multiplication of vectors in $\NN_0^{(t_1t_2)}$. 
 
 Since $\beta$ is bilinear, by the universal property of the tensor product, $\beta$ will take the same value on the elements $(a\mathbf{x}_0, \mathbf{y}_0)$ and $(\mathbf{x}_0+ \sum_{i=1}^mb_i\mathbf{x}_i,\mathbf{y}_0+\sum_{j=1}^nc_j \mathbf{y}_j)$. In particular,
  the following equality holds
 $$(a-1)(\overline{\mathbf{x}}_0\ast\overline{\mathbf{y}}_0)=\overline{\mathbf{y}}_0\ast\sum_{i=1}^mb_i\overline{\mathbf{x}}_i+\overline{\mathbf{x}}_0\ast\sum_{j=1}^nc_j\overline{\mathbf{y}}_j+\sum_{(i,j)\in [1,m]\times[1,n]}b_ic_j\overline{\mathbf{x}}_i\ast\overline{\mathbf{y}}_j.$$

 Then, for each $k\in [1,t_1t_2]$, 
 
 $$(a-1)\overline{\mathbf{x}}_{0}[k]\overline{\mathbf{y}}_{0}[k]=\overline{\mathbf{y}}_{0}[k]\sum_{i=1}^mb_i\overline{\mathbf{x}}_{i}[k]+\overline{\mathbf{x}}_{0}[k]\sum_{j=1}^nc_j\overline{\mathbf{y}}_{j}[k]+\sum_{(i,j)\in [1,m]\times[1,n]}b_ic_j\overline{\mathbf{x}}_{i}[k]\overline{\mathbf{y}}_{j}[k].$$

This implies that $(a-1)\overline{\mathbf{x}}_{0}[k]\geq \sum_{i=1}^mb_i\overline{\mathbf{x}}_{i}[k]$ for each $k\in [1,t_1t_2]$ and therefore $(a-1)\mathbf{x}_{0}[k]\geq \sum_{i=1}^mb_i\mathbf{x}_{i}[k]$ for each $k\in [1,t_1]$. Consequently, $(a-1)\mathbf{x}_0\geq \sum_{i=1}^mb_i\mathbf{x}_i$ in $\NN_0^{(t_1)}$. Since $H_1$ is Krull, $(a-1)\mathbf{x}_0\geq \sum_{i=1}^mb_i\mathbf{x}_i$ in $H_1$ as well. That is, there is some factorization of $(a-1)\mathbf{x}_0$ in $H_1$ involving all $\mathbf{x}_i$ such that $b_i>0$. Similarly, there is some factorization of $(a-1)\mathbf{y}_0$ in $H_2$ involving all $\mathbf{y}_j$ such that $c_j>0$. Thus no new relations are found in $\glom(H_1, H_2)$.

\smallskip

We now show that each of the generators of $\glom(H_1, H_2)$ is in fact irreducible. 

Suppose that $\mathbf{x}_i\otimes \mathbf{y_0}=\mathbf{x}_k\otimes \mathbf{y}_0+\sum_{u=1}^n\mathbf{w}_u\otimes \mathbf{z}_u$ for some $i\not=k$ and some $\mathbf{w}_u\in H_1$ and $\mathbf{z}_u\in H_2$. Then, applying the bilinear map $\beta$ defined above, $\overline{\mathbf{x}}_i\ast\overline{\mathbf{y}}_0\leq \overline{\mathbf{x}}_k\ast\overline{\mathbf{y}}_0$ in $\NN_0^{(t_1t_2)}$. But then $\overline{\mathbf{x}}_{i}[l]\overline{\mathbf{y}}_{0}[l]\leq \overline{\mathbf{x}}_{k}[l]\overline{\mathbf{y}}_{0}[l]$ for all $l\in [1,t_1t_2]$ and so $\mathbf{x}_{i}[l]\leq \mathbf{x}_{k}[l]$ for all $l\in [1,t_1]$. Therefore $\mathbf{x}_i\leq \mathbf{x}_k$ in $\NN_0^{(t_1)}$. Since $H$ is Krull, $\mathbf{x}_i\leq \mathbf{x}_k$ in $H_1$ as well, contradicting the fact that $\mathbf{x}_i$ and $\mathbf{x}_k$ are irreducible in $H_1$. Similarly, no $\mathbf{x}_0\otimes \mathbf{y}_j$ is comparable to any other $\mathbf{x}_0\otimes \mathbf{y}_k$ in $\glom(H_1, H_2)$.

 Finally, suppose that $\mathbf{x}_i\otimes \mathbf{y}_0=\mathbf{x}_0\otimes \mathbf{y}_j+\sum_{u=1}^n\mathbf{w}_u\otimes \mathbf{z}_u$ for some $i\in [1,m]$, $j\in [1,n]$, $\mathbf{w}_u\in H_1$, and $\mathbf{z}_u\in H_2$. Define, for each index $k\in [1,t_1]$ and $l\in [1,t_2]$, $\gamma_{k,l}\colon H_1\times H_2\rightarrow \NN_0$ by $\gamma_{k,l}(\mathbf{x}_i, \mathbf{y}_j)=\mathbf{x}_{i}[k]\mathbf{y}_{j}[l]$. For each $k$ and $l$, this  is  a bilinear map and so $\mathbf{x}_{i}[k]\mathbf{y}_{0}[l]\geq \mathbf{x}_{0}[k]\mathbf{y}_{j}[l]$ for each pair $(k,l)$. But, since $\mathbf{x}_0$ and $\mathbf{x}_i$ are incomparable in $H_1$ and since $\mathbf{y}_0$ and $\mathbf{y}_j$ are incomparable in $H_2$, we can choose $k$ and $l$ so that $\mathbf{x}_{0}[k]>\mathbf{x}_{i}[k]$ and $\mathbf{y}_{0}[l]<\mathbf{y}_{j}[l]$, contradicting $\mathbf{x}_{i}[k]\mathbf{y}_{0}[l]\geq \mathbf{x}_{0}[k]\mathbf{y}_{j}[l]$. This shows that no $\mathbf{x}_i\otimes \mathbf{y}_0$ is divisible by any $\mathbf{x}_0\otimes \mathbf{y}_j$. A similar argument shows that no $\mathbf{x}_0\otimes \mathbf{y}_j$ is divisible by any $\mathbf{x}_i\otimes \mathbf{y}_0$ and thus all of the given generators of $\glom(H_1, H_2)$ are irreducible.

\smallskip

Finally, we show that $\glom(H_1, H_2)$ is Krull. Suppose that $\delta\in \mathbb Z\glom(H_1, H_2)$. By considering the forms of the generators of $\glom(H_1, H_2)$, $\delta=\mathbf{h}_1\otimes \mathbf{y}_0+\mathbf{x}_0\otimes \mathbf{h}_2$ for some $\mathbf{h}_1\in \mathbb ZH_1$ and $\mathbf{h}_2\in \mathbb ZH_2$. If $N\in \NN_0$ and $N\delta\in \glom(H_1, H_2)$, $N\mathbf{h}_1\otimes \mathbf{y}_0+\mathbf{x}_0\otimes N\mathbf{h}_2\in \glom(H_1,H_2)$. We claim that $N\mathbf{h}_1\in H_1$ and $N\mathbf{h}_2\in H_2$. Assuming this claim, since each of $H_1$ and $H_2$ is Krull, $\mathbf{h}_1\in H_1$ and $\mathbf{h}_2\in H_2$, whence $\delta\in \glom(H_1,H_2)$. Thus the new semigroup $\glom(H_1,H_2)$ is Krull. We now show that $N\mathbf{h}_1\in H_1$ and $N\mathbf{h}_2\in H_2$. Because $N\delta\in \glom(H_1, H_2)$, we can write $N\delta=\mathbf{k}_1\otimes \mathbf{y}_0+\mathbf{x}_0\otimes \mathbf{k}_2$ with $\mathbf{k}_i\in H_i$. That is, $(N\mathbf{h}_1-\mathbf{k}_1)\otimes \mathbf{y}_0+\mathbf{x}_0\otimes (N\mathbf{h}_2-\mathbf{k}_2)=0$ in $\mathbb Z\glom(H_1, H_2)\subseteq \mathbb Z^{(t_1+t_2)}$. Consider the bilinear map $\mathbb Z^{(t_1)}\times \mathbb Z^{(t_2)}\rightarrow \mathbb Z^{(t_1!t_2!)}$ defined by taking the product of each entry of $\mathbb Z^{(t_1)}$ with each entry of $\mathbb Z^{(t_2)}$ as the components of $\mathbb Z^{(t_1)}$ and $\mathbb Z^{(t_2)}$ sequence through each of the $t_1!\times t_2!$ possible permutations. Then, for each $l_1\in [1,t_1]$ and $l_2\in [1,t_2]$, $(Nh_{1l_1}-k_{1l_1}+x_{0l_1})(Nh_{2l_2}-k_{2l_2}+y_{0l_2})=0$. Since we have considered all permutations of the entries of these vectors, without loss of generality $Nh_{1l_1}-k_{1l_1}+x_{0l_1}=0$ for all $l_1\in [1,t_1]$ and thus $N\mathbf{h}_1-\mathbf{k}_1+\mathbf{x}_0=0$. But then $N\mathbf{h}_1=\mathbf{k}_1-\mathbf{x}_0$, and since $\mathbf{k}_1, \mathbf{x}_0\in H_1$ and $H_1$ is Krull, $N\mathbf{h}_1\in H_1$ as well. Also, since $-\mathbf{x}_0=N\mathbf{h}_1-\mathbf{k}_1$, $(N\mathbf{h}_1-\mathbf{k}_1)\otimes \mathbf{y}_0+\mathbf{x}_0\otimes (N\mathbf{h}_2-\mathbf{k}_2)=0$ becomes $\mathbf{x}_0\otimes (N\mathbf{h}_2-\mathbf{k}_2-\mathbf{y}_0)=0$. Since $\mathbf{x}_0$ is nonzero in each component, using the bilinear map from above gives $N\mathbf{h}_2-\mathbf{k}_2-\mathbf{y}_0=0$. Since $\mathbf{k}_2, \mathbf{y}_0\in H_2$ $N\mathbf{h}_2=\mathbf{k_2}+\mathbf{y}_0\in H_2$ as well. This proves the claim, finishing the proof.

\smallskip

Because all relations from $H_1$ and $H_2$ are preserved in $\glom(H_1, H_2)$ and as no new relations are created in this tensor product, $\mathbf{x}_i\otimes \mathbf{y}_0$ is a divisor of $a(\mathbf{x}_0\otimes \mathbf{y}_0)$ only when $\mathbf{x}_i$ is a divisor of $a\mathbf{x}_0$ in $H_1$ and $\mathbf{x}_0\otimes \mathbf{y}_j$ is a divisor of $a(\mathbf{x}_0\otimes \mathbf{y}_0)$ only when $\mathbf{y}_j$ is a divisor of $a\mathbf{y}_0$ in $H_2$. Consequently, $\s(\mathbf{x}_0\otimes\mathbf{y}_0)=(s_1+t_1-1, s_2+t_2-1, s_3+t_3-1, \ldots)$ in $\glom(H_1, H_2)$ when $\s(\mathbf{x}_0)=(s_1, s_2, s_3, \ldots)$ in $H_1$ and $\s(\mathbf{y}_0)=(t_1, t_2, t_3, \ldots)$ in $H_2$.
\end{proof}

\bigskip

We finish the section by compiling  a list of results that are consequences of the examples from Section \ref{s:examples} and  Theorem \ref{theorem:FTG}. The first result of this kind, Corollary \ref{1isthelonliestnumber}, combines Theorems \ref{terri} and \ref{theorem:FTG}, showing that sequences with only jumps of size $2$ and larger are realizable as divisor sequences of irreducible elements in Krull monoids. Then, in Corollary \ref{partialanswer}, we give a heuristic for building divisor sequences involving jumps of size $1$.

\smallskip

\begin{corollary}\label{1isthelonliestnumber}
Let $\s=(s_n)_n$  be a nondecreasing eventually constant sequence of positive integers such that  such that $s_1=1$ and $s_{i+1}-s_i\in \NN\backslash\{1\}$ for all $i\geq 1$. There exists a Krull monoid $H$ and an irreducible element $a\in H$ such that $\s(a)=\s$.
\end{corollary}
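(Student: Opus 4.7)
The plan is to decompose $\s$ along its first difference sequence $\partial \s = [s_{i+1}-s_i]_{i\geq 1}$, realize each nonzero entry separately via Theorem \ref{terri}, and then combine the resulting monoids via repeated glomming. By hypothesis every entry of $\partial \s$ lies in $\{0\}\cup\{d\geq 2\}$, and since $\s$ is eventually constant only finitely many entries are nonzero. Let the nonzero entries of $\partial \s$ be $d_1,d_2,\ldots,d_r$, occurring at positions $i_1,i_2,\ldots,i_r$. (If $r=0$ then $\s=(\overline{1})$, which is the divisor sequence of the unique atom of the free monoid $\NN_0$, a Krull monoid, and we are done.)

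For each $j\in\{1,\ldots,r\}$, apply Theorem \ref{terri} with $n=d_j$ and $k=i_j+1$. Both $n\geq 2$ and $k\geq 2$ as required, so we obtain a Krull monoid $H^{(j)}$ together with an irreducible element $\mathbf{x}_0^{(j)}$ whose divisor sequence equals $1$ for indices $<i_j+1$ and equals $d_j+1$ for indices $\geq i_j+1$. Hence $\partial \s(\mathbf{x}_0^{(j)})$ has a single nonzero entry $d_j$ at position $i_j$, matching the $j$th contribution to $\partial \s$. Note that the construction of Theorem \ref{terri} provides an explicit finite list of atoms that appear in factorizations of sums of $\mathbf{x}_0^{(j)}$, and the element $\mathbf{x}_0^{(j)}=(1,\ldots,1)^T$ has all coordinates nonzero, so Setup \ref{glom} is satisfied.

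Now iteratively glom these monoids: form $\glom(H^{(1)},H^{(2)})$ using $\mathbf{x}_0^{(1)}$ and $\mathbf{x}_0^{(2)}$ as the distinguished atoms. Theorem \ref{theorem:FTG} yields a Krull monoid with irreducible element $\mathbf{x}_0^{(1)}\otimes \mathbf{x}_0^{(2)}$ having first difference sequence $\partial \s(\mathbf{x}_0^{(1)})+\partial \s(\mathbf{x}_0^{(2)})$. Crucially, the proof of Theorem \ref{theorem:FTG} explicitly identifies all atoms of $\glom(H_1,H_2)$ appearing in factorizations of sums of $\mathbf{x}_0^{(1)}\otimes \mathbf{x}_0^{(2)}$, and this element still has every coordinate nonzero; so the hypotheses of Setup \ref{glom} persist, allowing us to glom with $H^{(3)}$, and so on. After $r-1$ glommings we obtain a Krull monoid $H$ and an irreducible element $a$ with $\partial \s(a)=\sum_{j=1}^r\partial \s(\mathbf{x}_0^{(j)})=\partial \s$. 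Since $s_1(a)=1=s_1$ and both sequences agree on all first differences, $\s(a)=\s$.

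The main obstacle is the bookkeeping needed to confirm that Setup \ref{glom} is preserved through the iteration: one must check that after each glomming the new distinguished atom still has all coordinates nonzero and that a complete finite list of atoms appearing in its powers is available for the next step. Both facts are explicit in the construction of $\glom$, so the argument reduces to invoking Theorems \ref{terri} and \ref{theorem:FTG} finitely many times, with the trivial case $\s=(\overline{1})$ handled separately.
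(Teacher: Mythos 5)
Your argument is correct and is essentially the paper's own proof: decompose $\partial\s$ into its finitely many nonzero jumps, realize each single-jump sequence via Theorem~\ref{terri}, and combine them by iterated glomming through Theorem~\ref{theorem:FTG} (the paper organizes the same iteration as an induction on the index). Your explicit handling of the trivial case $\s=(\overline{1})$ and of the persistence of Setup~\ref{glom} under iteration just tidies details the paper leaves to the remark following Theorem~\ref{theorem:FTG}.
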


\begin{proof}
Let $\s=(s_1, s_2, \ldots)$ be as in the statement of the corollary.  We will prove the corollary by induction on $i$, in particular we will construct an irreducible element $a^i$ in a Krull monoid $H_i$ such that  if $\s(a^i) = (s_1, \dots, s_i, 0, \dots )$. A restatement of Theorem \ref{terri} in terms of the sequences of first differences, says that if $n,k\in \NN$ with $n\geq 2$ and $k\geq 1$, there exists a Krull monoid $H$ and an irreducible element $a\in H$ such that $\partial\s(a)=[0, \ldots , 0, n, 0, \ldots]$ with the $n$ appearing in the $k$th position. Thus there is a Krull monoid $H_1$ and an irreducible element $a_1\in H_1$ such that the$\partial\s(a_1)=[s_2-1, 0, \ldots, 0]$, proving the induction statement for $i=1$. Assume now that for some $i\geq 1$, there is a Krull monoid $H_i$ and an irreducible element $a^i\in H_i$ such that $\partial\s(a^i)=[s_2-1, \ldots, s_{i+1}-s_i, 0, \ldots]$. Again, by Theorem \ref{terri}, there is a Krull monoid $H'$ and an irreducible element $a'\in H'$ such that $\partial\s(a')=[0, \ldots , 0, s_{t+2}-s_{t+1}, 0, \ldots]$ where the $s_{t+2}-s_{t+1}$ occurs in the $t+1$st position. By the Fundamental Theorem of Glomming, $H_{i+1}=\glom(H_i, H')$ is a Krull monoid and there is an irreducible element $a^{i+1}\in H_{i+1}$ such that $\partial\s(a^{t+1})=\partial\s(a^i)+\partial\s(a')=[s_2-1, \ldots, s_{i+1}-s_i, s_{i+2}-s_{i+1}, 0, \ldots]$. 
By induction, and since $\s$ is eventually constant, there is a Krull monoid $H$ and an irreducible element $a\in H$ such that $\partial\s(a)=[s_2-1, s_3-s_2, \ldots, 0, 0, \ldots]$, whence $\s(a)=\s$.
\end{proof}

\smallskip

In a similar manner, we can combine the Fundamental Theorem of Glomming (Theorem \ref{theorem:FTG}), Proposition \ref{zach}, Proposition \ref{13334} and Corollary \ref{1isthelonliestnumber} to obtain a large family of sequences that can be realized as divisor sequences of atoms in Krull monoids. To simplify the statement of the corollary, we ignore divisor sequences of strong atoms.

\smallskip

\begin{corollary}\label{partialanswer} Let $\s= (s_n)_n$ be a nondecreasing  eventually constant sequence of natural numbers such that $s_1 = 1$ and $s_2 \geq 3$. Let $J = \{j_1, \ldots, j_k\}$ be the indices for which $s_{j_k+1}-s_{j_k}=1$. Partition $J$ by $J_e = \{j_\ell \colon j_\ell \text{ is even}\}$ and $J_o = \{j_\ell \colon j_\ell \text{ is odd}\}$.  Let $k_e = |J_e|$ and $k_o = |J_o|$. If $s_2 \geq 2k_e+3k_o+1$, then $\s$ can be realized as the divisor sequence of an irreducible element in a Krull monoid.
\end{corollary}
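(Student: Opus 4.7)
Following the inductive approach of Corollary \ref{1isthelonliestnumber}, I would express the target first difference $\partial\s = [s_{i+1}-s_i]_{i\geq 1}$ as a sum of first differences of sequences constructed in Section \ref{s:examples} and then glom them together via iterated application of Theorem \ref{theorem:FTG}, which (by its last statement) guarantees that glomming is additive on first differences.

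The decomposition uses three flavors of atomic contribution. For each index $i\geq 2$ with $s_{i+1}-s_i\geq 2$, apply Theorem \ref{terri} to produce a Krull monoid whose distinguished irreducible has first difference $[0,\ldots,0,s_{i+1}-s_i,0,\ldots]$ supported only at position $i$, contributing no boost at position $1$. For each $j\in J_e$, apply Proposition \ref{13334} with parameter $n=j+1$ (odd, since $j$ is even) to contribute the first difference $[2,0,\ldots,0,1,0,\ldots]$ with the $1$ at position $j$, charging $2$ to the running boost at position $1$. For each $j\in J_o$, apply a Proposition \ref{zach}-type block with $n=4$ (and parameter chosen so that the size-$1$ jump lands at position $j$) to contribute $[3,0,\ldots,0,1,0,\ldots]$, charging $3$ to the boost. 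After these contributions the accumulated position-$1$ boost is $2k_e+3k_o$, and the hypothesis $s_2\geq 2k_e+3k_o+1$ guarantees the residual $r:=s_2-1-2k_e-3k_o\geq 0$. If $r\geq 2$ I would add one Theorem \ref{terri} block with first difference $[r,0,0,\ldots]$, a pure position-$1$ boost. If $r=0$ no additional block is needed. If $r=1$ the extra unit can be absorbed by upgrading one size-$1$ block --- for instance by replacing a Proposition \ref{13334} block (boost $2$) by a Proposition \ref{zach} block at the same jump position (boost $3$), or by running a $J_o$ block with $n=5$ instead of $n=4$ (boost $3$ to $4$) --- an exchange that preserves the target jump position while increasing the position-$1$ boost by exactly $1$.

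Iterated glomming via Theorem \ref{theorem:FTG} then yields a single Krull monoid $H$ containing an irreducible element $\mathbf{z}$ whose first difference equals the sum of the individual first differences; this sum equals $\partial\s$ by construction, and since each block begins with $s_1=1$ the glommed sequence does as well, giving $\s(\mathbf{z})=\s$ as desired. The main obstacle is producing the odd-position size-$1$ building block used for each $j\in J_o$: as stated, Proposition \ref{zach} requires $k$ odd, so its size-$1$ jump lands at the \emph{even} first-difference position $k-1$, and Proposition \ref{13334} likewise places its jump at an even position. One must therefore either invoke a version of Proposition \ref{zach} permitting $k$ even with $n\geq 4$, or produce a companion block-monoid construction in the vein of the example following Proposition \ref{second134} achieving the same boost cost of $3$ per odd-position jump; establishing this is the technical heart of the argument, and the asymmetric budget $s_2\geq 2k_e+3k_o+1$ in the hypothesis precisely encodes the cost of these two kinds of building blocks.
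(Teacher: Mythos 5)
Your proposal follows the paper's proof essentially step for step: write $\partial\s$ as a sum of single-jump first differences, use Proposition \ref{13334} blocks (cost $2$) for the unit jumps at even indices, cost-$3$ blocks of shape $(1,4,\ldots,4,\overline{5})$ for the unit jumps at odd indices, Theorem \ref{terri} (via Corollary \ref{1isthelonliestnumber}) for the jumps of size at least $2$ and the leftover boost at the first position, and assemble everything with Theorem \ref{theorem:FTG}. In fact your handling of the residual $r=s_2-1-2k_e-3k_o$ is more careful than the paper's, which subtracts the full cost $2k_e+3k_o$ at the second entry and simply asserts that the remaining sequence $(t_n)_n$ has no unit jumps; when $r=1$ that assertion fails, and precisely your upgrade trick is needed (and is available, since $r=1$ forces $J\neq\emptyset$ because $k_e=k_o=0$ gives $r=s_2-1\geq 2$).

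The obstacle you flag is genuine, and it is exactly the point at which the paper's proof leans on Proposition \ref{zach}. For $j\in J_o$ one needs $(1,4,\ldots,4,\overline{5})$ with the $5$ first appearing at the even index $j+1$, whereas Proposition \ref{zach} as stated ($k$ odd) --- like Proposition \ref{13334} --- places its unit jump at an even position of the first-difference sequence; read literally, the paper's invocation (``the first $5$ is in the $i$th spot'' with $i$ odd, and likewise ``the first $4$ in the $i$th spot'' with $i$ even) puts the jump at position $i-1$ rather than $i$, so the paper discharges this step by citation to \cite{N11}, with an indexing/parity slip, rather than by construction. Your write-up is therefore incomplete in exactly the place where the paper's own argument is thinnest: to make it self-contained you must either prove the even-index variant of Proposition \ref{zach} with $n\geq 4$ or exhibit an explicit block-monoid example in the spirit of those following Proposition \ref{second134}; this is what the coefficient $3k_o$ in the hypothesis is budgeting for. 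Aside from that shared debt, your argument matches the paper's.
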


\begin{proof}
Write $(s_n)_n$ as  the sum of two sequences $(t_n)_n$ and $(u_n)_n$  such that 

$$
u_i=\begin{cases}
1, \quad\qquad\quad\;\;\; i\in J \\ 
2k_e+3k_0, \quad\; i=2 \\
 0, \quad\qquad\quad\;\;\; \text{otherwise}
 \end{cases}.$$ 
 If $i>2$ is even then by Proposition \ref{13334} there exists a Krull monoid $H_e^i$ and an irreducible element $x_e^i\in H_e^i$ with $\s(x_e^i)=(1, 3, \ldots, 3, \overline{4})$ where the first $4$ is in the $i$th spot. Similarly, if $i>2$ is odd then by Proposition \ref{zach}there exists some Krull monoid $H_0^i$ and an irreducible element $y_o^i\in H_e^i$ with $\s(y_o^i))=(1, 4, \ldots, 4, \overline{5})$ where the first $5$ is in the $i$th spot. Now $t_{i+1}-t_i\in \NN_0\backslash\{1\}$ for each $i$ and so by Corollary \ref{1isthelonliestnumber} there is a Krull semigroup $H$ and an irreducible element $z\in H$ such that $\s(z)=(t_n)$. By Theorem \ref{theorem:FTG}, we may combine  $H$ along with all of the $H_e^i$ and $H_o^i$ to obtain a new Krull semigroup $K$ and an irreducible element $w\in K$ with $\s(w)=\s$.
\end{proof}


\smallskip

The result of Corollary \ref{partialanswer} is not optimal in that the values of $s_2$ and $s_3$ are required to be increased whenever there are jumps of size $1$. However, many jumps of size $1$ can occur, even if $s_2$ and $s_3$ are not large. We see this in the final example of this section. 

\smallskip

\begin{example}
Consider the block monoid $\mathcal B(\mathbb Z_n, \{1,m\})$ with $n=km+1$ for some $k\geq 3$. Then $$\s(1m^3)=(\underbrace{1, \ldots, 1}_m, \underbrace{3, \ldots, 3}_m, \underbrace{4, \ldots, 4}_m, \ldots, \overline{k+3}).$$ Similarly, in $\mathcal B(\mathbb Z_n, \{1,m\})$ with $n=km+1$, $m\geq 2$, and $k\geq 1$, $$\s(1^{(k-1)m+1}m)=(1, 3, 4, \ldots,  \underbrace{k+1, \ldots, k+1}_{n-2}, \overline{k+2}).$$
\end{example}

\bigskip

\section{Transfer Homomorphisms}\label{th}

Many results in factorization theory are proved by transferring arithmetic properties from the object of interest to an easier-to-understand object via {\emph transfer homomorphism}.

 A surjective monoid homomorphism $\theta: H\rightarrow K$ is a \emph{transfer homomorphism} of reduced monoids $H$ and $K$ provided $\theta^{-1}(1_K)=\{1_H\}$ and such that whenever $\theta(h)=xy$ for $x,y\in K$, there exist $a\in \theta^{-1}(x)$ and $b\in\theta^{-1}(y)$ such that $ab=h$. 
 
 It is well-known that transfer homomorphisms preserve irreducibility ($h\in \mathcal A(H) \Leftrightarrow \theta(h)\in \mathcal A(K)$) as well as arithmetic invariants such as lengths of factorizations (see \cite[Proposition 3.2.3]{GHK06}). Other important and well-studied invariants in factorization theory, while not preserved, can be controlled with respect to transfer homomorphisms (see, for example \cite[Theorem 3.2.5]{GHK06}). In this section we provide an initial study of how divisor sequences  behave with respect to transfer homomorphisms. The results are given in Propositions \ref{p:th} and \ref{p:bm}. Limitations of Proposition \ref{p:th} are illustrated in Example \ref{e:last}. 
 
 For convenience, we first introduce two new pieces of notation. For an atom $a$ of a finitely generated monoid $H$, denote by $\stab(a)$ the least positive integer $n$ such that $|\mathcal A_k(a)|=|\mathcal A_{k+1}(a)|$ for all $k\geq n$. That is, $\stab(a)$ is the point at which the divisor sequences $\s(a)$ stabilizes. Also, we denote by $\s_i(a)=|\mathcal A_i(a)|$ the $i$th entry in the sequence $\s(a)$; that is, $$\s(a)=(1, \s_2(a), \ldots, \s_{\stab(a)-1}(a), \overline{\s_{\stab(a)}(a)}).$$ We begin with a result about general transfer homomorphisms.

\smallskip

\begin{proposition}\label{p:th}
Let $H$ and $K$ be Krull monoids and let $\theta:H\rightarrow K$ be a transfer homomorphsim. For an element $a\in \mathcal A(H)$, the following statements hold:
\begin{enumerate}
    \item $\stab(\theta(a))\leq\stab(a)$;
    \item for an element $b\in \mathcal A_{\stab(\theta(a))+k}(a)\backslash \mathcal A_{\stab(\theta(a))}(a)$ with $k\geq 0$, then $\theta(b)=\theta(x)$ for some $x\in \mathcal A_{\stab(\theta(a))}(a)$. In particular, an atom that divides $a^{\stab(\theta(a))+k}$ in $H$ is in the fiber of some atom that already divides $a^{\stab(\theta(a))}$.
\end{enumerate}
\end{proposition}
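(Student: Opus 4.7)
The plan is to reduce both statements to a single structural fact, namely that the restriction of $\theta$ to $\mathcal{A}_n(a)$ is a surjection onto $\mathcal{A}_n(\theta(a))$ for every $n$. Once this surjectivity is in hand, both (1) and (2) follow quickly. The key preliminary ingredient is the standard fact that transfer homomorphisms preserve irreducibility in both directions; this is recalled in the paper and follows from $\theta^{-1}(1_K) = \{1_H\}$ together with the lifting property.

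First I would verify that $\theta$ restricts to a map $\mathcal{A}_n(a) \to \mathcal{A}_n(\theta(a))$: if $b$ is an atom of $H$ dividing $a^n$, then $\theta(b)$ is an atom of $K$ dividing $\theta(a)^n$. To establish surjectivity, take $c \in \mathcal{A}_n(\theta(a))$ and write $\theta(a)^n = c \cdot d$ in $K$. Since $\theta(a^n) = c d$, the defining lifting property of transfer homomorphisms yields $c', d' \in H$ with $\theta(c') = c$, $\theta(d') = d$, and $a^n = c'd'$. Factoring $c'$ into atoms of $H$ and applying $\theta$ gives a factorization of the atom $c$ into atoms of $K$; the preimage-of-identity condition forces all but one factor of $c'$ to be $1_H$, so $c'$ is itself an atom of $H$ lying in $\mathcal{A}_n(a)$ with $\theta(c') = c$. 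This is the step I expect to require the most care, because it is where all three hypotheses on $\theta$ (surjectivity, trivial fiber over $1_K$, and the lifting property) get used simultaneously.

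For (1), note that the chain $\mathcal{A}_n(a) \subseteq \mathcal{A}_{n+1}(a)$ combined with stabilization of cardinalities at $n = \stab(a)$ forces $\mathcal{A}_n(a) = \mathcal{A}_{\stab(a)}(a)$ for all $n \geq \stab(a)$. Applying the surjection $\theta$ from the previous paragraph,
\[
\mathcal{A}_n(\theta(a)) = \theta\bigl(\mathcal{A}_n(a)\bigr) = \theta\bigl(\mathcal{A}_{\stab(a)}(a)\bigr) = \mathcal{A}_{\stab(a)}(\theta(a))
\]
for all such $n$, so $\stab(\theta(a)) \leq \stab(a)$.

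For (2), the case $k = 0$ is vacuous, so assume $k \geq 1$ and pick $b \in \mathcal{A}_{\stab(\theta(a))+k}(a)$. Then $\theta(b)$ is an atom of $K$ dividing $\theta(a)^{\stab(\theta(a))+k}$, hence lies in $\mathcal{A}_{\stab(\theta(a))+k}(\theta(a)) = \mathcal{A}_{\stab(\theta(a))}(\theta(a))$ by the very definition of $\stab(\theta(a))$. Invoking the surjectivity of $\theta \colon \mathcal{A}_{\stab(\theta(a))}(a) \to \mathcal{A}_{\stab(\theta(a))}(\theta(a))$ produces the required $x \in \mathcal{A}_{\stab(\theta(a))}(a)$ with $\theta(x) = \theta(b)$, completing the argument.
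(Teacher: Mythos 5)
Your proposal is correct and uses essentially the same mechanism as the paper: lifting factorizations of powers of $\theta(a)$ through the transfer homomorphism and using that $\theta$ preserves and reflects irreducibility, combined with stabilization of the divisor sets. Packaging this as the single lemma that $\theta$ maps $\mathcal{A}_n(a)$ onto $\mathcal{A}_n(\theta(a))$ for every $n$ is a tidy unification, but its proof is exactly the lifting argument the paper applies separately in parts (1) and (2).
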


\smallskip

\begin{proof}
For (1), set  $\stab(a)=n$, so that the divisors of $a^k$ are precisely the divisors of $a^{k+1}$ whenever $k\geq n$. Let $x\in \mathcal A(K)$ with $x\mid \theta(a)^{n+1}$. Then $\theta(a^{n+1})=xy$ for some element $y\in K$. As $\theta$ is a transfer homomorphism, there exists an element $b\in \theta^{-1}(x)$ and $c\in \theta^{-1}(y)$ such that $a^{n+1}=bc$ such that  $b$ is irreducible. As $\stab(a)=n$, $b\mid a^n$ and, because $\theta$ is a homomorphism, $x=\theta(b)\mid \theta(a)^n$. We proved that any element that divides $\theta(a)^{n+1}$ also divides $\theta(a)^{n}$, it follows that $\stab(\theta(a))\leq \stab(a)$.

\smallskip

For (2), pick an element  $b\in \mathcal A_{\stab(\theta(a))+k}(a)\backslash \mathcal A_{\stab(\theta(a))}(a)$ for some $k\geq 0$. Then $b$ is an atom of $H$ that divides $a^{\stab(\theta(a))+k}$ but not $a^{\stab(\theta(a))}$. Write $a^{\stab(\theta(a))+k}=bc$ for $c\in H$. Then $\theta(a)^{\stab(\theta(a))+k}=\theta(b)\theta(c)$ and so $\theta(b)\mid \theta(a)^{\stab(\theta(a))+k}$. By definition, $\theta(b)\mid \theta(a)^{\stab(\theta(a))}$ as well and so $\theta(a)^{\stab(\theta(a))}=\theta(b)y$ for some $y\in K$. As $\theta$ is a transfer homomorphism, there is $x\in H$ with $\theta(x)=\theta(b)$ and $z\in \theta^{-1}(y)$ such that $a^{\stab(\theta(a))}=xz$. Moreover, $x\in \mathcal A(H)$ since $\theta$ is a transfer homomorphism and $b\in \mathcal A(H)$.
\end{proof}

\smallskip

\begin{example}\label{e:last}
Let $H=\left\{(x,y,z)\in \NN_0^{(3)}\colon x+y=2z\right\}\subseteq \NN_0^{(3)}$, a half-factorial Krull monoid with $\mathcal A(H)=\{\alpha=(2,0,1), \beta=(0,2,1), \gamma=(1,1,1)\}$. Observe that $\alpha+\beta=\gamma+\gamma$, so $H$ is not factorial. Since $H$ is half-factorial, the length function $\ell: H\rightarrow \mathbb N_0$ is a transfer homomorphism with $\ell(\gamma)=1$. Then $\s(\gamma)=(1, \overline{3})$, yet $\s(\ell(\gamma))=(\overline{1})$. Thus the inequality in Proposition \ref{p:th}(1) can be strict. Moreover, $\ell(\alpha)=\ell(\beta)=\ell(\gamma)=1$ illustrating Proposition \ref{p:th}(2) as well.
\end{example}

\smallskip

We conclude with a result in the more specific setting that $K$ is the associated block monoid of $H$.

\smallskip

\begin{proposition}\label{p:bm}
Let $H$ be a finitely generated reduced Krull monoid, let $K=\mathcal B(G_0)$ be the associated block monoid, and let $\theta:H\rightarrow K$ be the canonical transfer homomorphism.
\begin{enumerate}
\item For any $B\in \mathcal A(\mathcal B(G_0))$, there is $b\in H$ with $\s(b)=\s(B)$.
\item There is a constant $C=C(G, G_0)$ depending only on the class group $G$ of $H$ and the set of primes $G_0$ such that $\s_n(a)\leq 1+(n-1)C$ for each $n$ and for all $i\geq 1$, $$\s_i(\theta(a))\leq \s_i(a)\leq C\s_i(\theta(a)).$$
\end{enumerate}
\end{proposition}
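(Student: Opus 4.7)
The plan is to address the two parts in turn. For part~(1), I will lift the block $B$ to a specific atom of $H$ by making a rigid choice of one prime divisor per class appearing in the support of $B$; this lift will have the property that divisors of its powers are completely determined by their $\theta$-images. For part~(2), the two inequalities come from the general fact that $\theta$ restricts to a surjection on divisor sets whose fibers are uniformly bounded in size; and the linear bound is obtained by observing that $\s_n(a)$ is actually bounded by an absolute constant $C^{\ast}=C^{\ast}(G,G_0)$ depending only on $(G,G_0)$, and then choosing $C$ large enough to absorb this constant into the linear expression $1+(n-1)C$.

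For part~(1), given $B\in\mathcal A(\mathcal B(G_0))$, I pick for each class $g\in\operatorname{supp}(B)$ a single prime divisor $p_g$ of $H$ in class $g$ and set $b=\prod_{g}p_g^{v_g(B)}$ in the ambient free divisor monoid. Since $\theta(b)=B\in\mathcal B(G_0)$, the element $b$ lies in $H$; and since transfer homomorphisms preserve atomicity, $b\in\mathcal A(H)$. The induced map $\theta:\mathcal A_n(b)\to\mathcal A_n(B)$ is surjective by the standard lifting argument used already in Proposition~\ref{p:th} (any atom $B'\mid B^n$ lifts to an atom $a'\mid b^n$). For injectivity, note that every divisor $a'$ of $b^n$ in the free divisor monoid is supported on $\{p_g\}_{g\in\operatorname{supp}(B)}$; because the $p_g$ are in distinct classes, the factorization of $a'$ is reconstructed uniquely from $\theta(a')$. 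Hence $\theta$ restricts to a bijection on the divisor sets and $\s(b)=\s(B)$.

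For part~(2), the same transfer-lifting argument produces a surjection $\theta:\mathcal A_i(a)\twoheadrightarrow\mathcal A_i(\theta(a))$, so $\s_i(\theta(a))\leq\s_i(a)$. For the right-hand inequality, I bound $|\theta^{-1}(B')\cap\mathcal A(H)|$ for any block atom $B'$ by counting the distributions of each valuation $v_g(B')$ among the (finitely many) prime divisors of $H$ in class $g$; this yields a fiber bound $C_1=C_1(G,G_0)$. Since $|\mathcal A(\mathcal B(G_0))|\leq C_2=C_2(G_0)$ (finite because atom lengths in $\mathcal B(G_0)$ are bounded by the Davenport constant $D(G_0)$), the combined estimate $\s_n(a)\leq C_1C_2=:C^{\ast}$ holds uniformly in~$n$. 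Taking $C=\max\{C_1,\,C^{\ast}-1\}$ then simultaneously gives $\s_i(a)\leq C\,\s_i(\theta(a))$ and, for $n\geq 2$, $\s_n(a)\leq C^{\ast}\leq 1+(n-1)C$; the case $n=1$ is immediate from $\s_1(a)=1$. The main obstacle I anticipate is the clean bookkeeping of constants---in particular, arguing that the single constant $C$ depends only on $(G,G_0)$ rather than on the particular atom $a$ requires that finite generation of $H$ is already encoded into $G_0$ (so that the count of primes in each class is part of the data of $G_0$).
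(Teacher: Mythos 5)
Your part (1) is correct and is essentially the paper's argument: lift the block atom rigidly, choosing one prime divisor $p_g$ per class $g\in\operatorname{supp}(B)$, so that $\theta$ restricts to a bijection $\mathcal A_n(b)\to\mathcal A_n(B)$ for every $n$; your explicit injectivity step (recovering the exponent of $p_g$ in a divisor $a'$ of $b^n$ from the multiplicity of $g$ in $\theta(a')$, since the $p_g$ lie in distinct classes) is in fact spelled out more carefully than in the paper. The overall architecture of your part (2) --- surjectivity of $\theta$ on divisor sets for the left inequality, a fiber bound for the right inequality, and a uniform bound $C^{\ast}$ absorbed into $1+(n-1)C$ --- is also sound.

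The genuine gap is in the source of your fiber bound $C_1$. You bound $|\theta^{-1}(B')\cap\mathcal A(H)|$ by distributing each $v_g(B')$ over \emph{all} prime divisors of $H$ in the class $g$; but the number of primes in a fixed class is an invariant of $H$, not of the pair $(G,G_0)$: $G_0$ records only \emph{which} classes contain primes, not how many primes each class contains, and that count can be arbitrarily large while $(G,G_0)$ stays fixed. So the $C_1$ (hence the $C$) you produce depends on $H$, and the ``main obstacle'' you flag at the end is real --- the proposed resolution, that the count of primes per class is part of the data of $G_0$, is false. The repair is to localize the count, as the paper does: any atom of $H$ dividing $a^n$ divides $a^n$ in the free monoid $F$, hence is supported on the prime divisors of $a$ itself; since $\theta(a)$ is an atom of $\mathcal B(G_0)$, the element $a$ has at most $\mathsf D(G_0)$ prime divisors, and every atom in a fiber has length at most $\mathsf D(G_0)$ in $F$. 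Distributing multiplicities bounded by $\mathsf D(G_0)$ over at most $\mathsf D(G_0)$ primes per class gives a fiber bound, and hence a uniform bound on $\s_n(a)$, depending only on $\mathsf D(G_0)$ and $|G_0|$, i.e.\ only on $(G,G_0)$. This is exactly how the paper's proof proceeds (its counts $k_i$ of primes dividing the fixed lift are all at most $\mathsf D(G_0)$, after which it invokes explicit partition-function estimates to name a concrete constant); your cruder binomial count works just as well once it is restricted to the support of $a$.
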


\smallskip

\begin{proof}
With $H$, $K$, and $\theta$ as defined, we have the following commutative diagram
\[\begin{tikzcd}
H \arrow{r}{\subseteq} \arrow[swap]{d}{\theta=\tilde{\theta}|_H} & F=\mathcal F(P) \arrow{d}{\tilde{\theta}} \\
\mathcal B(G_0) \arrow{r}{\subseteq} & \mathcal F(G_0)
\end{tikzcd}
\]
where $F$ is a free monoid with basis $P$, where the horizontal inclusions are divisor theories, and where $\tilde{\beta}: p \mapsto [p]$. 

\smallskip

(1) If $A=[p_1]\cdots [p_t]$ in $\mathcal B(G_0)$, then $a=p_1\cdots p_t$ in $H$ with $\theta(a)=A$. In this case, $\s(a)=\s(A)$. Indeed, if $b$ is an atom of $H$ with $b\mid a^n$ for some $n$, then, in $F$, $b=p_1^{m_1}\cdots p_t^{m_t}$ for some $m_i$. But then $B:=\theta(b)=[P_1]^{m_1}\cdots [P_t]^{m_t}$ is an atom of $\mathcal B(G_0)$ that divides $A^n$. Conversely, if $a=p_1\cdots p_t$ is the factorization of $a\in \mathcal A(H)$ and $b\mid a^n$ for some $n$, $b=p_1^{m_1}\cdots p_t^{m_t}$ for some $m_i$. Now $B:=[p_1]^{m_1}\cdots [p_t]^{m_t}=\theta(b)$ divides $A^n$ where $A=\theta(a)$.

\smallskip

(2) Let $a=[p_1]^{r_1}\cdots [p_t]^{r_t}$ be an atom of $\mathcal B(G_0)$. If $A\in \theta^{-1}(a)$, $$A=p_{11}^{s_{11}}\cdots p_{1k_1}^{s_{1k_1}}\cdot p_{21}^{s_{21}}\cdots p_{2k_2}^{s_{2k_2}}\cdots p_{t1}^{s_{t1}}\cdots p_{tk_t}^{s_{tk_t}}$$ where $[p_{ij}]=[p_{ij'}]$ for all $j,j'$ and $\sum_j s_{ij}=r_i$ for each $i$. If $B$ is an irreducible divisor of $A$, then $b=\theta(B)$ is an irreducible divisor of $a$. Suppose $B\in \mathcal A_n(A)\backslash \mathcal A_{n-1}(A)$. Then $\theta(B)\in \mathcal A_m(a)$ for some $m\leq n$ and if $\theta(B)=[p_1]^{u_1}\cdots [p_t]^{u_t}$, then $B=p_{11}^{v_{11}}\cdots p_{1k_1}^{v_{1k_1}}\cdot p_{21}^{v_{21}}\cdots p_{2k_2}^{v_{2k_2}}\cdots p_{t1}^{v_{t1}}\cdots p_{tk_t}^{s_{tk_t}}$ with $\sum_jv_{ij}=u_i$ for each $i$ and $v_{ij}>(n-1)s_{ij}$ for some $i$ and $j$. Thus, we can bound $|\mathcal A_n(A)\backslash \mathcal A_{n-1}(A)|$ by considering partitions of the $u_i$s. In particular, this upper bound is maximized if $s_{ij}=1$ for all $i$ and $j$. In this case, the number of distinct irreducible divisors in $\mathcal A_n(A)\backslash\mathcal A_{n-1}(A)$ corresponding to a fixed irreducible divisor $b=[p_1]^{u_1}\cdots [p_t]^{u_t}$ is $\prod_{i=1}^tP_{k_i,n}(nu_i)$ where $P_{k,n}(m)$ denotes the number of partitions of $m$ into $k$ parts (allowing parts to be zero) with at least one part of value $n$.

With $\mathsf D(G_0)$ the Davenport constant of $G_0$, we trivially have that $t\leq \mathsf D(G_0)$ and $nu_i\leq n\mathsf D(G_0)$. Thus $$|\mathcal A_n(A)\backslash\mathcal A_{n-1}(A)|\leq \prod_{i=1}^tP_{k_i,n}(nu_i)\leq \mathsf D(G_0)P_{\mathsf D(G_0),n}(n\mathsf D(G_0)).$$ Moreover, by \cite{zoinks}, \begin{equation}\label{wow}\mathsf D(G_0)P_{\mathsf D(G_0),n}(n\mathsf D(G_0))<\frac{\mathsf D(G_0)^{1/4}e^{c\sqrt{n\mathsf D(G_0)}}}{n^{3/4}}e^{-\frac{2\sqrt{n\mathsf D(G_0)}}{c}Li_2\left(e^{-\frac{c(\mathsf D(G_0)+1/2)}{2\sqrt{n\mathsf D(G_0)}}}\right)}\end{equation} where $c=2\sqrt{\zeta(2)}=\pi\sqrt{2/3}$ and $Li_2(x)=\sum_{i=1}^\infty\frac{x^i}{i^2}$ is the dilogarithm function. 

With $C$ the constant on the right side of (\ref{wow}), if $\s(A)=(t_1, t_2, \ldots)$, $t_{n+1}-t_n\leq C$ for all $n\geq 1$. Since $t_1=1$, $t_n\leq 1+(n-1)C$ for all $n\geq 1$. Moreover, since $\theta^{-1}(\mathcal A_n(\theta(a))\backslash \mathcal A_{n-1}(\theta(a))\subseteq \mathcal A_n(a)$, $\s_i(\theta(a))\leq \s_i(a)\leq C\s_i(\theta(a))$.
\end{proof}

\smallskip

We illustrate Proposition \ref{p:bm} by way of a simple example.

\smallskip

\begin{example}
Let $G$ be a finite abelian group and set $H=\Delta(G\times G)=\{(g,g)\colon g\in G\} \subseteq G\times G$. Then $\mathcal B(G)=\{g_1\cdots g_t\in \mathcal F(G)\colon g_1+\cdots + g_t=0\}$ is the block monoid associated to $\mathcal B_H(G\times G)$, the monoid of all formal sequences in $G\times G$ which sum to an element in $H$. Since $(G\times G)/H\cong G$, the map $$\Theta: \mathcal B_H(G\times G)\rightarrow \mathcal B(G)$$ defined by $$(x_1, y_1)\cdots (x_t, y_t)\longmapsto \overline{(x_1-y_1)}\cdots \overline{(x_t-y_t)}$$ is a transfer homomorphism. In particular, for any $g_1^{r_1}\cdots g_t^{r_t}\in \mathcal A(\mathcal B(G))$ and any $h_{ij}\in G$, $$(g_1+h_{11}, h_{11})\cdots (g_1+h_{1r_1}, h_{1r_1})\cdots (g_t+h_{t1}, h_{t1})\cdots (g_t+h_{tr_t}, h_{tr_t})\in \mathcal A(\mathcal B_H(G\times G)).$$

\smallskip

For example, let $G=\mathbb Z_5$ and let $\alpha=13^3$, an irreducible element in $\mathcal B(G)$. Note that $$\mathcal A(\mathcal B(\mathbb Z_5, \{1,3\}))=\{1^5, 1^23, 13^3, 3^5\}.$$ Then: 
\begin{center}
\begin{tabular}{lcl}
$\alpha^2=(13^3)(13^3)=(1^23)(3^5)$ &\hspace{.25in}& $\alpha^3=(13^3)(13^3)(13^3)=(1^23)(13^3)(3^5)$ \\ $\alpha^4=(13^3)^4=(1^23)(13^3)^2(3^5)=(1^23)^2(3^5)^2$ && $\alpha^5=(1^5)(3^5)^3$
\end{tabular}
\end{center}

\smallskip

Thus $\s(\alpha)=(1, 3, 3, 3, \overline{4})$.

\smallskip

Using the correspondence $1\leftrightsquigarrow (1,0)$ and $3^3 \leftrightsquigarrow (3,0)(4,1)(0,2)$, we consider the element $\tilde{\alpha}=(1,0)(3,0)(4,1)(0,2)\in\theta^{-1}(\alpha)$. By definition, $\tilde{\alpha}$ is the only irreducible divisor of $\tilde{\alpha}$. The irreducible divisors of $\tilde{\alpha}^2$ each correspond to one of the three irreducible divisors of $\alpha$ as follows. The irreducible divisors of $13^3$ corresponds to $(1,0)(3,0)(4,1)(0,2)$, $(1,0)(3,0)^2(4,1)$, $(1,0)(3,0)^2(0,2)$, $(1,0)(3,0)(4,1)^2$, $(1,0)(3,0)(0,2)^2$, $(1,0)(4,1)(0,2)^2$, and $(1,0)(4,1)^2(0,2)$. Of these seven divisors, six are not divisors of $\tilde{\alpha}$. Similarly, $1^23$ corresponds to $(1,0)^2(3,0)$, $(1,0)^2(4,1)$, and $(1,0)^2(0,2)$ and $3^5$ corresponds to $(3,0)^2(4,1)^2(0,2)$, $(3,0)^2(4,1)(0,2)^2$, and $(3,0)(4,1)^2(0,2)^2$, giving six more new divisors of $\tilde{\alpha}^2$ that are not divisors of $\tilde{\alpha}$. Thus $|\mathsf A_2(\tilde{\alpha})\backslash \mathsf A_1(\tilde{\alpha})|=12$. Similar computations for the divisors of $\tilde{\alpha}^n$ with $n\in \{3,4,5\}$ show that $\s(\tilde{\alpha})=(1, 13, 25, 31, \overline{35})$.

\smallskip

Choosing different combinations of the elements from $G\times G$ corresponding to $3\in G$, we can obtain different divisor sequences. For example, if we use the correspondence  $3^3 \leftrightsquigarrow (3,0)^2(0,2)$, then $\s((1,0)(3,0)^2(0,2))=(1,7,10,11 ,\overline{13})$. And, if we use the correspondence $3^3 \leftrightsquigarrow (3,0)^3$, then $\s((1,0)(3,0)^3)=(1,3,3,3 ,\overline{4})$.
\end{example}


\end{document}